%% file: quadprog_paper.tex
\documentclass[11pt]{article}

\usepackage{lmodern}
\usepackage{amsmath, amssymb, amsthm}
\usepackage{booktabs}
\usepackage{graphicx}
\usepackage{microtype}
\usepackage{algorithm}
\usepackage{algpseudocode}
\usepackage{authblk}
\usepackage[margin=1in]{geometry}
\usepackage[colorlinks=true, linkcolor=blue, citecolor=blue, urlcolor=blue]{hyperref}

\newcommand{\norm}[1]{\lVert #1 \rVert}
\newcommand{\R}{\mathbb{R}}
\newcommand{\T}{^{\top}}
\newcommand{\Active}{\mathcal{A}}
\DeclareMathOperator{\range}{range}
\DeclareMathOperator{\nullsp}{null}
\DeclareMathOperator*{\argmin}{arg\,min}
\DeclareMathOperator*{\argmax}{arg\,max}

\newtheorem{proposition}{Proposition}[section]
\newtheorem{lemma}{Lemma}[section]

\newtheorem{remark}{Remark}[section]

\title{Goldfarb--Idnani Revisited:\\Invariants, Certificates, and the Limits of Guessing\\[0.5em]
\large A working note}

\author[1]{Thomas Schmelzer}
\author[2]{Martin Stoll}

\affil[1]{\small Jebel Quant Research, Abu Dhabi}
\affil[2]{\small Faculty of Mathematics, TU Chemnitz, Germany}

\date{}

\input{tables/quadprog_identities_defs}

\input{tables/quadprog_pdas_defs}
\input{tables/quadprog_sweep_defs}

\input{tables/quadprog_compare_defs}

\input{tables/quadprog_compare}
\input{tables/quadprog_portfolio_defs}
\input{tables/quadprog_portfolio}

\begin{document}
\maketitle

\input{sections/s0_abstract}
\input{sections/s1_introduction}
\input{sections/s2_problem}
\input{sections/s3_factorisation}
\input{sections/s4_iteration}
\input{sections/s5_termination}
\input{sections/s6_updates}
\input{sections/s8_pdas}
\input{sections/s9_sweep}
\input{sections/s11_experiments}
\input{sections/s12_portfolio}
\input{sections/s13_conclusions}
\input{sections/s14_reproducibility}

{\footnotesize
\bibliographystyle{plain}
\bibliography{bib/refs}
}

\end{document}

%% file: tables/quadprog_identities_defs.tex
\newcommand{\qpIdentStates}{560}
\newcommand{\qpIdentChecks}{29696}
\newcommand{\qpIdentCount}{28}
\newcommand{\qpIdentWorst}{5.2\times 10^{-15}}

%% file: tables/quadprog_pdas_defs.tex
\newcommand{\qpPdasInstances}{30}
\newcommand{\qpPdasFamilies}{5}
\newcommand{\qpPdasSizes}{12, 25, 50, 100, 200}
\newcommand{\qpPdasRepairsLo}{1.3}
\newcommand{\qpPdasRepairsHi}{3.5}
\newcommand{\qpPdasRepairsMax}{5}
\newcommand{\qpPdasRefN}{100}
\newcommand{\qpPdasBudgetOuter}{13}
\newcommand{\qpPdasNsmall}{12}
\newcommand{\qpPdasNbig}{200}
\newcommand{\qpPdasCertSmall}{23}
\newcommand{\qpPdasCertBig}{0}
\newcommand{\qpPdasAccepted}{610}
\newcommand{\qpPdasRejected}{0}
\newcommand{\qpPdasAcceptWorst}{2.6\times 10^{-13}}
\newcommand{\qpPdasSingularBox}{0.0}

\newcommand{\qpPdasSingularDup}{93.3}

%% file: tables/quadprog_sweep_defs.tex
\newcommand{\qpSweepFixedK}{5}

\newcommand{\qpSweepNlo}{100}
\newcommand{\qpSweepNhi}{1600}

\newcommand{\qpSweepSlopeLo}{0.13}
\newcommand{\qpSweepSlopeHi}{2.96}

%% file: tables/quadprog_compare_defs.tex
\newcommand{\qpCmpSizes}{50, 100, 200, 400}
\newcommand{\qpCmpNhi}{400}
\newcommand{\qpCmpTol}{10^{-9}}
\newcommand{\qpCmpResidExact}{2\times 10^{-14}}
\newcommand{\qpCmpResidOsqp}{2\times 10^{-14}}
\newcommand{\qpCmpResidClarabel}{4\times 10^{-5}}
\newcommand{\qpCmpResidDaqp}{2\times 10^{-14}}
\newcommand{\qpCmpFastLo}{3.1}
\newcommand{\qpCmpFastHi}{5.9}
\newcommand{\qpCmpFastBudget}{3.3}

\newcommand{\qpCmpVsDaqp}{8.43}
\newcommand{\qpCmpVsDaqpDense}{1.70}

\newcommand{\qpCmpVsRef}{5.5}
\newcommand{\qpCmpVsRefDense}{2.7}

%% file: tables/quadprog_compare.tex
\def\quadprogCompareRows{%
\multicolumn{9}{l}{\emph{box}} \\
cvx-quadprog & 0.07 & $6\times 10^{-16}$ & 0.11 & $1\times 10^{-15}$ & 0.30 & $2\times 10^{-15}$ & 1.02 & $3\times 10^{-15}$ \\
cvx-quadprog, \texttt{fast} & 0.07 & $6\times 10^{-16}$ & 0.08 & $1\times 10^{-15}$ & 0.13 & $2\times 10^{-15}$ & 0.33 & $3\times 10^{-15}$ \\
\texttt{quadprog} (C) & 0.02 & $1\times 10^{-15}$ & 0.10 & $6\times 10^{-16}$ & 0.60 & $1\times 10^{-15}$ & 5.64 & $1\times 10^{-15}$ \\
DAQP & 0.03 & $8\times 10^{-16}$ & 0.11 & $6\times 10^{-16}$ & 0.88 & $2\times 10^{-15}$ & 8.59 & $2\times 10^{-15}$ \\
OSQP & 0.47 & $1\times 10^{-15}$ & 1.00 & $2\times 10^{-15}$ & 3.54 & $2\times 10^{-15}$ & 15.20 & $3\times 10^{-15}$ \\
Clarabel & 0.57 & $6\times 10^{-16}$ & 2.41 & $7\times 10^{-16}$ & 8.43 & $1\times 10^{-15}$ & 34.43 & $1\times 10^{-15}$ \\
\addlinespace
\multicolumn{9}{l}{\emph{budget + bounds}} \\
cvx-quadprog & 0.15 & $1\times 10^{-15}$ & 0.32 & $2\times 10^{-15}$ & 0.63 & $3\times 10^{-15}$ & 3.75 & $1\times 10^{-14}$ \\
cvx-quadprog, \texttt{fast} & 0.15 & $1\times 10^{-15}$ & 0.22 & $2\times 10^{-15}$ & 0.53 & $4\times 10^{-15}$ & 1.13 & $9\times 10^{-15}$ \\
\texttt{quadprog} (C) & 0.03 & $1\times 10^{-15}$ & 0.18 & $1\times 10^{-15}$ & 1.16 & $3\times 10^{-15}$ & 12.92 & $6\times 10^{-15}$ \\
DAQP & 0.04 & $1\times 10^{-15}$ & 0.19 & $2\times 10^{-15}$ & 1.57 & $3\times 10^{-15}$ & 13.34 & $1\times 10^{-14}$ \\
OSQP & 0.56 & $1\times 10^{-15}$ & 1.47 & $2\times 10^{-15}$ & 5.68 & $3\times 10^{-15}$ & 25.83 & $1\times 10^{-14}$ \\
Clarabel & 0.66 & $5\times 10^{-8}$ & 3.08 & $4\times 10^{-8}$ & 10.20 & $2\times 10^{-6}$ & 38.39 & $2\times 10^{-7}$ \\
\addlinespace
\multicolumn{9}{l}{\emph{dense $C$}} \\
cvx-quadprog & 0.24 & $5\times 10^{-15}$ & 0.52 & $7\times 10^{-15}$ & 1.60 & $1\times 10^{-14}$ & 7.01 & $2\times 10^{-14}$ \\
cvx-quadprog, \texttt{fast} & 0.12 & $4\times 10^{-15}$ & 0.20 & $9\times 10^{-15}$ & 0.44 & $1\times 10^{-14}$ & 1.18 & $2\times 10^{-14}$ \\
\texttt{quadprog} (C) & 0.04 & $4\times 10^{-15}$ & 0.23 & $7\times 10^{-15}$ & 2.03 & $1\times 10^{-14}$ & 19.17 & $2\times 10^{-14}$ \\
DAQP & 0.03 & $6\times 10^{-15}$ & 0.17 & $7\times 10^{-15}$ & 1.31 & $1\times 10^{-14}$ & 11.93 & $2\times 10^{-14}$ \\
OSQP & 0.55 & $5\times 10^{-15}$ & 1.90 & $8\times 10^{-15}$ & 11.45 & $1\times 10^{-14}$ & 67.05 & $2\times 10^{-14}$ \\
Clarabel & 1.76 & $2\times 10^{-9}$ & 5.61 & $7\times 10^{-10}$ & 73.92 & $9\times 10^{-8}$ & 106.95 & $4\times 10^{-5}$ \\
\addlinespace
}

%% file: tables/quadprog_portfolio_defs.tex
\newcommand{\qpPfSpN}{494}
\newcommand{\qpPfSpT}{1213}
\newcommand{\qpPfSpKappa}{1\times 10^{5}}
\newcommand{\qpPfSpActive}{442}
\newcommand{\qpPfSpNames}{53}
\newcommand{\qpPfSpOuter}{454}
\newcommand{\qpPfSpRepairs}{8}
\newcommand{\qpPfSpResid}{2\times 10^{-15}}
\newcommand{\qpPfFtseN}{86}
\newcommand{\qpPfFtseT}{1698}

\newcommand{\qpPfFastSpeed}{4.2}
\newcommand{\qpPfPoints}{50}
\newcommand{\qpPfWarm}{1.8}
\newcommand{\qpPfCold}{37.8}
\newcommand{\qpPfSweepSpeed}{20.8}
\newcommand{\qpPfHits}{16}
\newcommand{\qpPfMisses}{34}
\newcommand{\qpPfNamesLo}{1}
\newcommand{\qpPfNamesHi}{55}

%% file: tables/quadprog_portfolio.tex
\def\quadprogPortfolioRows{%
\multicolumn{3}{l}{\emph{S\&P~500}, $n = 494$, $T = 1213$, $\kappa(\Sigma) = 1\times 10^{5}$, 442 active} \\
\quad cvx-quadprog & 38.6 & $2\times 10^{-15}$ \\
\quad cvx-quadprog, \texttt{fast} & 9.2 & $1\times 10^{-15}$ \\
\quad \texttt{quadprog} (C) & 127.2 & $3\times 10^{-15}$ \\
\quad DAQP & 59.4 & $1\times 10^{-15}$ \\
\quad OSQP & 64.4 & $9\times 10^{-16}$ \\
\quad Clarabel & 75.4 & $2\times 10^{-10}$ \\
\addlinespace
\multicolumn{3}{l}{\emph{FTSE~100}, $n = 86$, $T = 1698$, $\kappa(\Sigma) = 5\times 10^{2}$, 62 active} \\
\quad cvx-quadprog & 1.3 & $2\times 10^{-16}$ \\
\quad cvx-quadprog, \texttt{fast} & 0.4 & $3\times 10^{-16}$ \\
\quad \texttt{quadprog} (C) & 0.4 & $1\times 10^{-16}$ \\
\quad DAQP & 0.2 & $2\times 10^{-16}$ \\
\quad OSQP & 1.1 & $4\times 10^{-16}$ \\
\quad Clarabel & 2.2 & $5\times 10^{-5}$ \\
\addlinespace
}

%% file: sections/s0_abstract.tex
\begin{abstract}
\noindent
The dual active-set method of Goldfarb and Idnani solves the strictly convex
quadratic program by adding and dropping one constraint at a time, and needs no
phase one. Primal--dual active set and block principal pivoting skip the walk
altogether: they guess a whole active set at once and repair it from the signs
that come back.

For bound constraints the guess is safe. The system solved on a candidate set is
a principal submatrix of $G^{-1}$, positive definite whatever the guess, and that
$P$-matrix property is the hypothesis under which the guessing methods terminate
finitely. For $C\T x \ge b$ that hypothesis fails, and our one new claim is the
mechanism. The working-set system is
$C_\Active\T G^{-1} C_\Active$, positive definite only when $C_\Active$ has full
column rank, which is a property of the \emph{guess} and not of the data. The
$P$-matrix property is lost and the guarantee with it, and the obstruction is
structural, not a missing anti-cycling rule. What keeps such a method usable
anyway is that strict convexity makes the KKT conditions sufficient: a candidate
set is certified rather than trusted.

The exposure is easy to mistake for a technicality, so we measure it. Over six
hundred random instances across four constraint families it never occurs, which
is why it can be overlooked. Duplicating columns of $C$, as any programmatically
assembled model may, raises it to $93\%$, and the certified fraction falls from
$100\%$ to nothing. The measurements are on the shipped code and not only on
synthetic problems: on long-only portfolio data the active set reaches
$\qpPfSpActive{}$ of $\qpPfSpN{}+1$ constraints, against the small sets produced
by the synthetic families common in this literature.

Making the argument precise needs the method itself written down, and we derive
it from scratch around a single matrix $J$ satisfying $J\T G J = I$ and
$J\T A = [R;0]$. Every quantity the iteration computes follows from those two
lines. Each has a representation-free closed form, so any $(J,R)$ meeting the
invariants produces the same iterate. A step of length $t$ changes the objective
by exactly $t(t/2 + u_*)\,z\T G z$. And when the primal cannot move and no
multiplier can be reduced, the vector $r$ the solver already holds is a Farkas
certificate, so the infeasibility verdict is a proof rather than a failure to
converge. That derivation is Goldfarb and Idnani's. This is a working note
documenting the mathematics of the package that implements it, and of the claims
above only the first is new.
\end{abstract}

%% file: sections/s1_introduction.tex
\section{Introduction}
\label{sec:intro}

The strictly convex quadratic program
\begin{equation}
  \min_{x \in \R^n} \; \tfrac12 x\T G x - a\T x
  \qquad \text{subject to} \qquad C\T x \ge b,
  \label{eq:qp}
\end{equation}
with $G \in \R^{n \times n}$ symmetric positive definite, $C \in \R^{n \times m}$
and the first $m_{\mathrm{eq}}$ of the $m$ constraints understood as equalities,
is solved a great many times a day. Mean-variance portfolio
selection~\cite{markowitz1952} is exactly~\eqref{eq:qp}; so is every step of a
sequential quadratic programming method and every horizon of a linear model
predictive controller. For dense instances of small to moderate size, with $n$ from
a handful to a few thousand, the method of choice has for forty years been the
dual active-set algorithm of Goldfarb and Idnani~\cite{goldfarb1982,goldfarb1983},
whose distinguishing property is that it needs no phase-one problem: the
unconstrained minimiser $G^{-1}a$ is dual feasible for free, so the iteration can
start there and drive primal infeasibility to zero.

This note derives that method in the form the \texttt{cvx-quadprog}
package~\cite{cvxquadprog} implements it, together with the two extensions that
package adds. Its aim is to state every identity the solver depends on, derive
it, and name the degenerate cases. Section~\ref{sec:experiments} then checks each
one against the code that ships, which is a different question from whether it is
true. Every empirical claim made
anywhere below is measured there; none is quoted from elsewhere.

\subsection{What this note is, and is not}
\label{ssec:status}

It is a working note, written to document a package, and not a submission. The
mathematics of Sections~\ref{sec:problem} to~\ref{sec:updates} is Goldfarb and
Idnani's, published in 1983 and taught since; we re-derive it because a solver
should have its identities and its degenerate cases written down in one place,
with proofs, and because the two extensions in Sections~\ref{sec:pdas}
and~\ref{sec:sweep} cannot be stated without them. Of the items listed below,
one is a claim we believe to be new. The others are corrections, closed forms and
readings that are known to people who know the method well; we have not found them
set down together, and the extensions depend on them, which is why they are
listed rather than assumed. A reader looking for a new algorithm or a new theorem
about quadratic programming should stop here. A reader who has to implement,
debug or trust one of these solvers is who the note is written for.

\subsection{Why the dual method}

A primal active-set method maintains a feasible iterate and works towards
stationarity. It must therefore begin at a feasible point, and finding one is a
linear program in its own right, the phase-one problem. The dual method
reverses the roles. It maintains stationarity and the sign conditions on the
multipliers throughout, and works towards feasibility. Its starting point is
free, because with no constraint active the stationarity condition reads
$Gx = a$ and the multiplier vector is empty, so $x = G^{-1}a$ satisfies
everything the method asks of an iterate. One Cholesky factorisation and one
triangular solve replace the phase-one problem entirely.

The price is that the iterate is infeasible until the last step, so the method
cannot be stopped early for an approximate answer. In exchange it terminates at
an exactly feasible point of the active-set lattice rather than approaching one
asymptotically as an interior-point method does, and it warm-starts almost
perfectly, which is why parametric solvers are built on active-set
methods~\cite{nocedal2006}. Section~\ref{sec:sweep} exploits that.

\subsection{What has happened to the method since 1983}
\label{ssec:related}

The method has not stood still, and three lines of work bear on what is claimed
below. Its restriction to $G \succ 0$ was lifted by Boland~\cite{boland1997}, who
generalises the dual method to the semidefinite case with a proof of finite
termination; the strict convexity kept here is a choice of scope, since the
factorisation of Section~\ref{sec:factorisation} needs $G^{-1}$. Finite termination
under nondegeneracy is not special to this method either: Forsgren, Gill and
Wong~\cite{forsgren2016} prove it for paired primal and dual active-set methods in
a more general setting.

The guessing methods of Section~\ref{sec:pdas} have a literature of their own, and
it is where the first claim below has to be placed. Curtis, Han and
Robinson~\cite{curtis2015pdas} give a primal--dual active-set framework that is
globally convergent on convex quadratic programs, obtained by carrying an index set
auxiliary to the active-set estimate to hold the indices that would otherwise
cycle. That the plain block exchange needs such a repair on general constraints is
therefore known. What Section~\ref{sec:pdas} adds is why, in terms of the
working-set system, and Section~\ref{ssec:pdas} what it costs when it is ignored.

The modern successor is not a reimplementation but a different factorisation.
Arnstr\"om, Bemporad and Axehill~\cite{arnstrom2022daqp} carry the same dual walk on
recursive $LDL^\top$ updates rather than the $(J,R)$ pair used here, handle
semidefinite $G$ by outer proximal-point iterations, and ship it as library-free C
for embedded model-predictive control; it is benchmarked against this solver in
Section~\ref{ssec:compare}, being the fair comparison: same family, different
linear algebra. Arnstr\"om and Axehill~\cite{arnstrom2022certification}
additionally compute the \emph{exact} worst-case iteration count of active-set
methods on parametric QP families. That is stronger than anything proved here, and
different in kind: theirs is computational and per-family, enumerating the
subproblem sequences an algorithm visits over a parameter set, where
Proposition~\ref{prop:ascent} is general but conditional on nondegeneracy.

\subsection{What the note claims}

\begin{enumerate}
\item \textbf{Why finite termination does not survive the general constraint
class. That the plain block exchange needs repairing there is
known~\cite{curtis2015pdas}; the mechanism is the one claim here we believe to be
new, and it is an observation rather than a theory, turning on the same rank
condition as Proposition~\ref{prop:sub}.} For bound constraints the
working-set system is a principal submatrix of $G^{-1}$, hence positive definite
whatever the guess, and that $P$-matrix property is the hypothesis under which
block principal pivoting terminates finitely~\cite{judice1994,murty1988,%
schmelzer2026nncg}. For $C\T x \ge b$ it is lost, structurally, not for want
of an anti-cycling rule (Section~\ref{sec:pdas}), and the consequence is measured, not
asserted: over $600$ attempts on well-posed random families the failure
never occurs, and repeating columns of $C$ raises it to $93\%$
(Section~\ref{ssec:pdas}).

\item \textbf{A derivation organised around two invariants.} The solver carries one
matrix $J$, defined by $J\T G J = I$ and $J\T A = [R;0]$, and every quantity the
iteration computes follows from those two lines
(Section~\ref{sec:factorisation}): the columns of $J$ are a $G$-orthonormal basis
of $\R^n$, split so that the trailing block spans the working set's feasible
subspace and $J_2J_2\T$ is the reduced inverse Hessian.

\item \textbf{The iterates do not depend on the representation.} Invariants
\eqref{eq:inv1}--\eqref{eq:inv2} do not determine $(J,R)$; the freedom is exactly
$(J_1S, J_2V, SR)$ for a sign matrix $S$ and an orthogonal $V$. Both quantities
the iteration consumes are invariant under all of it
(Proposition~\ref{prop:invariance}), and exactly so in IEEE arithmetic in the
sign case. The implementation is therefore free to pick its orthogonal reduction
for speed alone.

\item \textbf{An exact objective increment.} A step of length $t$ changes the
objective by $t(t/2 + u_*)\,z\T G z$ (Proposition~\ref{prop:increment}), so the
value is carried and not re-evaluated, and the increment is positive in both
step directions.

\item \textbf{The infeasibility verdict is a certificate.} When the primal cannot
move and no multiplier can be reduced, the vector $r$ the solver already holds
exhibits infeasibility in the sense of Farkas
(Proposition~\ref{prop:farkas}). The method does not fail to converge on an
infeasible problem; it proves the problem infeasible.

\item \textbf{Linear dependence is unreachable.} The step rules make the
working-set normals full column rank for free, so no rank test is needed anywhere
(Proposition~\ref{prop:rank}), and guessing forfeits it.

\item \textbf{The cost of a warm start, corrected.} Recovery from cached factors
is $O(n^2)$ and not $O(nk)$, the $n^2$ residing entirely in $x_u = JJ\T a$
(Section~\ref{sec:sweep}). The distinction is invisible on the family one would
naturally test, where $k$ grows with $n$; Section~\ref{sec:sweep} separates
the two by holding $k$ fixed, and separates the arithmetic from the interpreter
dispatch that dominates below a few hundred variables.

\end{enumerate}

\subsection{Outline}

Sections~\ref{sec:problem} to~\ref{sec:updates} derive the method: the optimality
conditions and the working-set subproblem, the invariants, the iteration and its
termination, and the two factorisation updates.
Sections~\ref{sec:pdas} and~\ref{sec:sweep} treat the
two departures from the classical method, and Sections~\ref{sec:experiments}
and~\ref{sec:portfolio} the measurements, first against the claims above and then
on real data.

\subsection{Notation}

Vectors are columns. For an index set $\Active \subseteq \{1,\dots,m\}$ of size
$k$ we write $A = C_{\cdot\Active} \in \R^{n \times k}$ for the matrix of the
corresponding constraint normals, $b_\Active$ for the corresponding right-hand
sides, and $c_i$ for the $i$-th column of $C$. Throughout, $\Active$ is the
solver's \emph{working set}: the constraints it is currently holding as
equalities. At a solution the working set is a subset of the constraints active
there, and the two coincide except in the degenerate case treated in
Remark~\ref{rem:degeneracy}. The entering constraint's normal is $n_*$ and its
right-hand side $b_*$. We reserve $u$ for multipliers of working-set constraints,
$u_*$ for the entering constraint's own multiplier, and $\lambda$ for a full
length-$m$ multiplier vector with zeros off the working set.

%% file: sections/s2_problem.tex
\section{The problem and its optimality conditions}
\label{sec:problem}

Write the program~\eqref{eq:qp} as
\begin{equation}
  \min_{x \in \R^n} \; f(x) := \tfrac12 x\T G x - a\T x
  \qquad \text{subject to} \qquad
  \begin{cases}
    c_i\T x = b_i, & i \le m_{\mathrm{eq}},\\
    c_i\T x \ge b_i, & i > m_{\mathrm{eq}},
  \end{cases}
  \label{eq:qp2}
\end{equation}
with $G \succ 0$. Two conventions differ from the textbook ones and are inherited
from the reference implementation's signature.
The linear term is \emph{subtracted}, so that the unconstrained minimiser is
$G^{-1}a$ rather than $-G^{-1}a$; and the constraints are held column-wise in
$C$, so that $C\T x \ge b$ rather than $Cx \le b$. Neither affects anything
below beyond signs.

\subsection{Existence, uniqueness, sufficiency}

Let $\Omega = \{x : c_i\T x = b_i \ (i \le m_{\mathrm{eq}}),\ c_i\T x \ge b_i \
(i > m_{\mathrm{eq}})\}$ denote the feasible set, a closed convex polyhedron.

Since $G \succ 0$ the objective is strictly convex and coercive, so \eqref{eq:qp2} has exactly one solution whenever $\Omega \neq \emptyset$; see~\cite[Ch.~16]{nocedal2006}. What the method needs beyond that is the converse of the usual necessary conditions, and we prove it because every certificate in this note rests on it.

The Lagrangian of~\eqref{eq:qp2} is
$L(x,\lambda) = \tfrac12 x\T G x - a\T x - \lambda\T(C\T x - b)$, and the
Karush--Kuhn--Tucker conditions are
\begin{align}
  G x - a &= C \lambda, \label{eq:stat}\\
  c_i\T x &= b_i \quad (i \le m_{\mathrm{eq}}), \qquad
  c_i\T x \ge b_i \quad (i > m_{\mathrm{eq}}), \label{eq:pfeas}\\
  \lambda_i &\ge 0 \quad (i > m_{\mathrm{eq}}), \label{eq:dfeas}\\
  \lambda_i\,(c_i\T x - b_i) &= 0 \quad (i > m_{\mathrm{eq}}). \label{eq:comp}
\end{align}
Multipliers of equality constraints are unrestricted in sign, which is the one
asymmetry the algorithm has to carry through every subsequent formula.

\begin{proposition}[Sufficiency]\label{prop:sufficient}
If $(x,\lambda)$ satisfies \eqref{eq:stat}--\eqref{eq:comp} then $x$ is the
unique solution of~\eqref{eq:qp2}.
\end{proposition}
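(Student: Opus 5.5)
The plan is to compare $f$ at the KKT point with $f$ at an arbitrary feasible point, using the exact quadratic expansion of $f$ rather than any appeal to convexity theory. Fix any $y \in \Omega$ and write $d = y - x$. Since $f$ is quadratic with Hessian $G$,
\begin{equation*}
  f(y) = f(x) + (Gx - a)\T d + \tfrac12 d\T G d .
\end{equation*}
By stationarity~\eqref{eq:stat}, the linear term equals $\lambda\T C\T d = \sum_i \lambda_i\, c_i\T (y - x)$. The whole argument then reduces to showing that this sum is nonnegative, and that the quadratic remainder is strictly positive unless $d = 0$.

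I will handle the sum term by term, splitting the indices as in~\eqref{eq:qp2}.

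For an equality index $i \le m_{\mathrm{eq}}$, both $x$ and $y$ are feasible, so $c_i\T y = c_i\T x = b_i$. The term therefore vanishes whatever the sign of $\lambda_i$. This is precisely where the missing sign condition on equality multipliers costs nothing.

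For an inequality index $i > m_{\mathrm{eq}}$, complementarity~\eqref{eq:comp} gives $\lambda_i c_i\T x = \lambda_i b_i$. Hence
\begin{equation*}
  \lambda_i\, c_i\T(y - x) = \lambda_i\,(c_i\T y - b_i) \ge 0,
\end{equation*}
because $\lambda_i \ge 0$ by~\eqref{eq:dfeas} and $c_i\T y \ge b_i$ by feasibility of $y$.

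Summing over all indices gives $f(y) \ge f(x) + \tfrac12 d\T G d$. Since $G \succ 0$, this yields $f(y) > f(x)$ whenever $y \ne x$. Condition~\eqref{eq:pfeas} places $x$ itself in $\Omega$, so $x$ is a minimiser, and the strict inequality makes it the only one. Uniqueness thus comes out of the argument directly; it does not need the existence remark quoted above.

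I do not expect a real obstacle. The only point needing care is the bookkeeping of the sign convention: the linear term is subtracted in $f$, and the Lagrangian is written as $-\lambda\T(C\T x - b)$. Together these make~\eqref{eq:stat} read $Gx - a = C\lambda$ with a plus sign, and that is exactly what makes each term of the sum have the favourable sign. I will write the expansion with these signs explicit so the reader can check them against~\eqref{eq:qp2} in one line.
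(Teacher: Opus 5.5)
Your proposal is correct and is essentially the paper's proof. The paper uses the strict-convexity inequality $f(\tilde x) \ge f(x) + (Gx-a)\T(\tilde x - x)$, with equality only at $\tilde x = x$, where you write the exact expansion with remainder $\tfrac12 d\T G d$; for a quadratic that is the same fact made explicit, and your term-by-term treatment of $\sum_i \lambda_i c_i\T(y-x)$ matches the paper's. Your explicit remark that \eqref{eq:pfeas} places $x$ in $\Omega$ fills a step the paper leaves implicit.
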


\begin{proof}
Let $\tilde x \in \Omega$. By strict convexity,
$f(\tilde x) \ge f(x) + (G x - a)\T(\tilde x - x)$, with equality only if
$\tilde x = x$. Substituting~\eqref{eq:stat},
\[
  (G x - a)\T(\tilde x - x)
  = \lambda\T\bigl(C\T \tilde x - C\T x\bigr)
  = \sum_i \lambda_i\bigl[(c_i\T\tilde x - b_i) - (c_i\T x - b_i)\bigr]
  = \sum_i \lambda_i (c_i\T \tilde x - b_i),
\]
the last step by~\eqref{eq:comp} for the inequalities and by $c_i\T x = b_i$ for
the equalities. Each surviving term is non-negative: for $i \le m_{\mathrm{eq}}$
the bracket vanishes, and for $i > m_{\mathrm{eq}}$ both factors are
non-negative by~\eqref{eq:pfeas} and~\eqref{eq:dfeas}. Hence
$f(\tilde x) \ge f(x)$ with equality only at $\tilde x = x$.
\end{proof}

Proposition~\ref{prop:sufficient} is the licence for everything in
Sections~\ref{sec:pdas} and~\ref{sec:sweep}. A point equipped with multipliers
passing the four conditions is \emph{the} answer, so a method that produces
candidates may be as unprincipled as one likes provided each candidate is
tested. Sufficiency is what makes an unguaranteed method safe rather than
usually right.

\subsection{The working-set subproblem}

Fix a working set $\Active$ of size $k$ with normals $A = C_{\cdot\Active}$ and
suppose $A$ has full column rank $k$. The \emph{working-set subproblem} is
\begin{equation}
  \min_{x} \; \tfrac12 x\T G x - a\T x
  \qquad \text{subject to} \qquad A\T x = b_\Active,
  \label{eq:sub}
\end{equation}
with every constraint in $\Active$ held as an equality regardless of its type in
the original program, and every constraint outside $\Active$ ignored.

\begin{proposition}[Solution of the subproblem]\label{prop:sub}
Problem~\eqref{eq:sub} has the unique solution and multipliers
\begin{equation}
  u = \bigl(A\T G^{-1} A\bigr)^{-1}\bigl(b_\Active - A\T x_u\bigr),
  \qquad
  x = x_u + G^{-1} A\, u,
  \qquad x_u := G^{-1} a .
  \label{eq:subsol}
\end{equation}
\end{proposition}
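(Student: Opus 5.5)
The plan is to write down the KKT system of the equality-constrained problem~\eqref{eq:sub}, solve it by block elimination, and then appeal to Proposition~\ref{prop:sufficient} (applied to~\eqref{eq:sub} itself, which has only equality constraints) for optimality and uniqueness. With the Lagrangian $\tfrac12 x\T G x - a\T x - u\T(A\T x - b_\Active)$, stationarity and feasibility read
\[
  G x - a = A u, \qquad A\T x = b_\Active .
\]
These are exactly conditions \eqref{eq:stat}--\eqref{eq:comp} for a program whose constraints are all equalities. In that case \eqref{eq:dfeas} and \eqref{eq:comp} are vacuous and $u$ is unrestricted in sign. So any solution $(x,u)$ of this linear system certifies $x$ as the unique minimiser.

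The construction runs in two steps. First, the stationarity equation gives $x = G^{-1}a + G^{-1}Au = x_u + G^{-1}Au$, using $G \succ 0$ for invertibility. Substituting this into the constraint gives
\[
  A\T x_u + (A\T G^{-1} A)\,u = b_\Active .
\]
Second, I show $M := A\T G^{-1}A$ is invertible. It is symmetric, and for $w \neq 0$ we have $w\T M w = (Aw)\T G^{-1}(Aw) > 0$, because $Aw \neq 0$ by the full column rank of $A$ and $G^{-1} \succ 0$. Hence $M$ is positive definite, $u = M^{-1}(b_\Active - A\T x_u)$ is uniquely determined, and back-substitution gives $x$. These are exactly the formulas~\eqref{eq:subsol}.

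Uniqueness has two parts. For $x$, it follows from Proposition~\ref{prop:sufficient}, or directly from strict convexity on the affine set $\{A\T x = b_\Active\}$, which is nonempty because $A\T$ is surjective. For the multipliers, $Au = Gx - a$ together with full column rank of $A$ determines $u$. The only step that is more than algebra is the positive definiteness of $A\T G^{-1}A$. It is worth stating explicitly that this is where the rank hypothesis enters, since without it $M$ is singular and the multipliers are no longer unique. This same rank condition on the guess is what the introduction flags as failing for the guessing methods.
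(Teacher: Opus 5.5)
Your proposal is correct and follows essentially the same route as the paper: solve the stationarity equation $Gx - a = Au$ for $x$, substitute into $A\T x = b_\Active$, invert the dual Hessian $A\T G^{-1}A$ using $G^{-1} \succ 0$ and full column rank, and conclude uniqueness from strict convexity on the affine feasible set. Your additions only spell out steps the paper leaves implicit: the quadratic-form argument for positive definiteness, the appeal to Proposition~\ref{prop:sufficient} with every constraint an equality, and uniqueness of $u$ from $Au = Gx - a$.
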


\begin{proof}
Stationarity for~\eqref{eq:sub} reads $Gx - a = Au$, so
$x = G^{-1}a + G^{-1}Au = x_u + G^{-1}Au$. Substituting into $A\T x = b_\Active$
gives $A\T x_u + (A\T G^{-1}A)u = b_\Active$. The matrix $A\T G^{-1}A$ is
positive definite because $G^{-1} \succ 0$ and $A$ has full column rank, hence
invertible, which yields~\eqref{eq:subsol}; the point so constructed is the unique
minimiser by strict convexity of $f$ on the affine
feasible set of~\eqref{eq:sub}.
\end{proof}

We call $A\T G^{-1} A$ the \emph{dual Hessian} of the working set. It appears
under three different names in what follows: as the matrix whose Cholesky factor
the solver carries (Section~\ref{sec:factorisation}), as the coefficient matrix of
the fast path's linear system (Section~\ref{sec:pdas}), and as the Gram matrix of
the least-squares problem the dual direction solves
(Lemma~\ref{lem:directions}). Its invertibility is equivalent to $A$
having full column rank, and that single fact is where every rank condition in
this note comes from.

The pair $(x,u)$ of~\eqref{eq:subsol}, extended by zeros off the working set,
satisfies stationarity~\eqref{eq:stat} and complementarity~\eqref{eq:comp} by
construction, since the working-set constraints have zero slack and the rest have
zero multiplier. What it need not satisfy is the sign
condition~\eqref{eq:dfeas} on $\Active$'s inequalities, or primal
feasibility~\eqref{eq:pfeas} off $\Active$. The dual method maintains the first
and works towards the second.

\begin{remark}[Cold start]\label{rem:cold}
For $\Active = \emptyset$ the subproblem is unconstrained and~\eqref{eq:subsol}
degenerates to $x = x_u = G^{-1}a$ with an empty multiplier vector. The sign
condition holds vacuously, so this state satisfies the method's invariant with no
work beyond one Cholesky factorisation. A dual method's phase-one problem is
nothing more. The objective there is
$f(x_u) = \tfrac12 x_u\T G x_u - a\T x_u = -\tfrac12 a\T x_u$, which the
implementation uses as the initial value of the running objective.
\end{remark}

\subsection{The dual problem}

The following explains the method's name, and also why the objective value can be
tracked as a running total and why it increases.

\begin{proposition}[Dual function]\label{prop:dual}
The Lagrangian dual of~\eqref{eq:qp2} is the concave program
$\max\,\psi(\lambda)$ over $\lambda_i \ge 0$ $(i > m_{\mathrm{eq}})$, where
\begin{equation}
  \psi(\lambda) = -\tfrac12 (a + C\lambda)\T G^{-1} (a + C\lambda) + b\T\lambda .
  \label{eq:dualfn}
\end{equation}
If $(x,\lambda)$ satisfies stationarity~\eqref{eq:stat} and
complementarity~\eqref{eq:comp}, then $\psi(\lambda) = f(x)$.
\end{proposition}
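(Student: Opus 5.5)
The plan is to compute the dual function directly as the infimum of the Lagrangian over $x$, and then to evaluate it at a stationary, complementary pair.

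For fixed $\lambda$, the Lagrangian
\[
L(x,\lambda) = \tfrac12 x\T G x - (a + C\lambda)\T x + b\T\lambda
\]
is a strictly convex quadratic in $x$ because $G \succ 0$. Its unique minimiser solves $Gx = a + C\lambda$, so $x(\lambda) = G^{-1}(a + C\lambda)$. Substituting back, the quadratic and linear terms combine as $\tfrac12 v\T G^{-1} v - v\T G^{-1} v = -\tfrac12 v\T G^{-1} v$ with $v = a + C\lambda$. This gives $\inf_x L(x,\lambda) = \psi(\lambda)$ as in \eqref{eq:dualfn}. The dual problem is the maximisation of this function over the multipliers admitted by the Lagrangian duality construction. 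Those are $\lambda_i \ge 0$ for the inequalities ($i > m_{\mathrm{eq}}$); the equality multipliers are free, since the term $\lambda_i(c_i\T x - b_i)$ vanishes on $\Omega$ for either sign. Concavity follows either because $\psi$ is a pointwise infimum of functions affine in $\lambda$, or directly because $-\tfrac12 v\T G^{-1} v$ is a concave quadratic composed with an affine map and $b\T\lambda$ is linear.

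For the identity, suppose $(x,\lambda)$ satisfies \eqref{eq:stat} and \eqref{eq:comp}. Stationarity says exactly $x = G^{-1}(a + C\lambda) = x(\lambda)$, so $\psi(\lambda) = L(x,\lambda) = f(x) - \lambda\T(C\T x - b)$. It remains to show that the sum $\sum_i \lambda_i (c_i\T x - b_i)$ vanishes. For $i > m_{\mathrm{eq}}$ each term is zero by \eqref{eq:comp}.

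The one point needing care is the equality constraints. Complementarity \eqref{eq:comp} as written covers only $i > m_{\mathrm{eq}}$, so the terms with $i \le m_{\mathrm{eq}}$ need an extra justification. In the intended use the pair comes from a working-set subproblem: the equality constraints lie in the working set, so their slack is zero, and off the working set $\lambda$ is zero. I would therefore read the complementarity hypothesis as covering every index, namely $\lambda_i (c_i\T x - b_i) = 0$ for all $i$, and state this explicitly. This matches the remark after Proposition~\ref{prop:sub}, which says working-set constraints have zero slack and the rest have zero multiplier. With that reading the sum vanishes and $\psi(\lambda) = f(x)$.
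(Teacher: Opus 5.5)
Your proof is correct and follows the paper's route: minimise the Lagrangian at $x(\lambda) = G^{-1}(a + C\lambda)$, then use stationarity to identify $x$ with $x(\lambda)$ and complementarity to make $\lambda\T(C\T x - b)$ vanish. The paper closes the equality terms with the same kind of unstated extra assumption, invoking $c_i\T x = b_i$ for $i \le m_{\mathrm{eq}}$. Your explicit extension of \eqref{eq:comp} to all indices is the right reading of the hypothesis, and it is slightly weaker than the paper's.
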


\begin{proof}
Minimising $L(\cdot,\lambda)$ gives $x(\lambda) = G^{-1}(a + C\lambda)$ and,
writing $s = a + C\lambda$, $\psi(\lambda) = \tfrac12 s\T G^{-1}s - s\T G^{-1}s +
b\T\lambda$, which is~\eqref{eq:dualfn}. For the second claim \eqref{eq:stat}
gives $s = Gx$, and \eqref{eq:comp} with $c_i\T x = b_i$ on the equalities gives
$b\T\lambda = (Gx - a)\T x$; adding the two yields $f(x)$.
\end{proof}

So along any sequence of states of the form~\eqref{eq:subsol} the tracked value is
at once the primal objective at the subproblem minimiser and the dual objective at
the current multipliers. The strict increase of Proposition~\ref{prop:increment}
is therefore progress towards optimality rather than bookkeeping, and it gives
the sandwich
$\psi(\lambda) \le f(x^\star) \le f(\tilde x)$: the method approaches the optimal
value from below, the mirror image of a primal active-set method.

%% file: sections/s3_factorisation.tex
\section{The matrix the solver carries}
\label{sec:factorisation}

Proposition~\ref{prop:sub} gives the subproblem solution in closed form, and one
could implement the method by evaluating~\eqref{eq:subsol} afresh at every
iteration. That costs a $k \times k$ factorisation of the dual Hessian per step,
$O(nk^2 + k^3)$, and over the $O(n)$ steps a dual method takes it comes to
$O(n^4)$. The whole point of the Goldfarb--Idnani formulation is to replace it
with $O(n^2)$ per step, and it does so by carrying a single matrix through the
iteration and updating it.

\subsection{Two invariants}

Let $G = R_G\T R_G$ be the Cholesky factorisation with $R_G$ upper triangular,
and set $J_0 := R_G^{-1}$, again upper triangular. Then
$J_0 J_0\T = R_G^{-1} R_G^{-\top} = G^{-1}$ and $J_0\T G J_0 = I$. The solver's
state is a matrix $J \in \R^{n \times n}$ and an upper triangular
$R \in \R^{k \times k}$ satisfying
\begin{align}
  J\T G J &= I_n, \label{eq:inv1}\\
  J\T A &= \begin{bmatrix} R \\ 0 \end{bmatrix}. \label{eq:inv2}
\end{align}
Everything the iteration computes is a consequence of these two lines, so read
them slowly. Equation~\eqref{eq:inv1} is equivalent to
$J J\T = G^{-1}$ with $J$ nonsingular, and says that the columns of $J$ form a
basis of $\R^n$ orthonormal in the inner product $\langle v,w\rangle_G = v\T G w$.
Equation~\eqref{eq:inv2} says that in that basis the working-set normals are
triangular: partitioning
\begin{equation}
  J = \begin{bmatrix} J_1 & J_2 \end{bmatrix},
  \qquad J_1 \in \R^{n \times k},\quad J_2 \in \R^{n \times (n-k)},
  \label{eq:split}
\end{equation}
the lower block of~\eqref{eq:inv2} reads $J_2\T A = 0$.

At the cold start $\Active = \emptyset$, so~\eqref{eq:inv2} is vacuous and
$J = J_0$ serves. The updates of Section~\ref{sec:updates} maintain both
invariants by post-multiplying $J$ with orthogonal matrices, which preserves
\eqref{eq:inv1} exactly because $J\T G J = I$ is invariant under $J \mapsto JW$
for $W\T W = I$, while restoring the triangular shape of~\eqref{eq:inv2}.

\begin{proposition}[What the blocks mean]\label{prop:blocks}
Suppose \eqref{eq:inv1}--\eqref{eq:inv2} hold with $R$ nonsingular. Then
\begin{enumerate}
  \item[(i)] $R\T R = A\T G^{-1} A$, so $R$ is a Cholesky factor of the dual Hessian,
        unique up to the signs of its rows;
  \item[(ii)] $\range(J_2) = \nullsp(A\T)$, and the columns of $J_2$ are a
        $G$-orthonormal basis of it;
  \item[(iii)] $J_1 = G^{-1} A R^{-1}$ and $\range(J_1) = \range(G^{-1}A)$, the
        $G$-orthogonal complement of $\nullsp(A\T)$;
  \item[(iv)] $J_1 J_1\T = G^{-1}A(A\T G^{-1}A)^{-1}A\T G^{-1}$ and consequently
        \begin{equation}
          J_2 J_2\T = G^{-1} - G^{-1}A\bigl(A\T G^{-1}A\bigr)^{-1}A\T G^{-1}.
          \label{eq:reduced}
        \end{equation}
\end{enumerate}
\end{proposition}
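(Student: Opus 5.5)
The whole proof runs through the identity $JJ\T = G^{-1}$, which follows from \eqref{eq:inv1} because $J$ is square and $J\T G J = I$ makes it nonsingular with $J^{-1} = J\T G$. I would derive each item by left-multiplying \eqref{eq:inv2} by $J$ and splitting into the blocks of \eqref{eq:split}. Since $JJ\T = G^{-1}$, we get
\[
G^{-1}A = J J\T A = \begin{bmatrix} J_1 & J_2\end{bmatrix}\begin{bmatrix} R\\ 0\end{bmatrix} = J_1 R .
\]
This gives $J_1 = G^{-1}AR^{-1}$ at once, using that $R$ is nonsingular, and with it the first half of (iii). Item (i) follows by computing $A\T G^{-1} A = A\T J J\T A = [R\T\; 0][R;0] = R\T R$. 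For uniqueness, I would invoke uniqueness of the Cholesky factor with positive diagonal: any two nonsingular upper triangular $R, R'$ with $R\T R = R'^{\top}R'$ satisfy $R' R^{-1} = R'^{-\top}R\T$. The left side is upper triangular and the right side is lower triangular, so both are a diagonal $D$; then $D\T D = I$ forces $D$ to be a sign matrix, and $R' = DR$.

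For (ii), the lower block of \eqref{eq:inv2} gives $J_2\T A = 0$, so $A\T J_2 = 0$ and $\range(J_2) \subseteq \nullsp(A\T)$. The columns of $J$ are linearly independent, so $J_2$ has rank $n-k$. Since $A$ has full column rank $k$ (equivalently $R$ is nonsingular, via (i)), $\nullsp(A\T)$ also has dimension $n-k$, and the inclusion is an equality. $G$-orthonormality is read off from the trailing diagonal block of \eqref{eq:inv1}, namely $J_2\T G J_2 = I$. The remainder of (iii) comes the same way. Because $R$ is invertible, $\range(J_1) = \range(G^{-1}A R^{-1}) = \range(G^{-1}A)$. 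The off-diagonal block $J_1\T G J_2 = 0$ of \eqref{eq:inv1} says $\range(J_1)$ is $G$-orthogonal to $\range(J_2) = \nullsp(A\T)$. The dimensions $k$ and $n-k$ add to $n$, so $\range(J_1)$ is the full $G$-orthogonal complement. One can also check this directly: $(G^{-1}Ay)\T G v = y\T A\T v = 0$ for $v \in \nullsp(A\T)$.

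For (iv), I substitute $J_1 = G^{-1}AR^{-1}$ to get
\[
J_1J_1\T = G^{-1}A (R\T R)^{-1} A\T G^{-1},
\]
and then replace $R\T R$ by $A\T G^{-1}A$ using (i). Formula \eqref{eq:reduced} then follows from $J_1J_1\T + J_2J_2\T = JJ\T = G^{-1}$.

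None of the steps is hard. The only place that needs care is the dimension count in (ii), where I must record that nonsingular $R$ is equivalent to full column rank of $A$. This holds because $R\T R = A\T G^{-1}A$ by (i), and that matrix is invertible exactly when $A$ has full column rank, by the remark after Proposition~\ref{prop:sub}. The sign-uniqueness claim in (i) is the only step that needs an argument beyond substitution, and the triangular-equals-lower-triangular trick above handles it.
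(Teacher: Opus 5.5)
Your proof is correct and follows essentially the same route as the paper. Your key step $G^{-1}A = JJ\T A = J_1R$ is the paper's inversion $A = J^{-\top}[R;0] = GJ_1R$ read from the other side, and items (i), (ii) and (iv) are the same substitutions. You also spell out three things the paper leaves implicit: the sign-uniqueness of triangular Gram factors, the deduction of full column rank of $A$ from nonsingular $R$, and the dimension count that makes $\range(J_1)$ the whole $G$-orthogonal complement.
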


\begin{proof}
(i) $A\T G^{-1} A = A\T J J\T A = R\T R$ by~\eqref{eq:inv2}, and two upper
triangular matrices with the same Gram matrix differ by a left factor
$\mathrm{diag}(\pm1)$.

(ii) $J_2\T A = 0$ places $\range(J_2) \subseteq \nullsp(A\T)$, with equality by
dimension since $J$ is nonsingular and $A$ has full column rank; the basis is
$G$-orthonormal because $J_2\T G J_2 = I_{n-k}$ is the trailing block
of~\eqref{eq:inv1}.

(iii) From~\eqref{eq:inv1}, $J^{-1} = J\T G$ and hence $J^{-\top} = G J$.
Inverting~\eqref{eq:inv2}, $A = J^{-\top}[R;0] = GJ[R;0] = G J_1 R$, so
$J_1 = G^{-1}AR^{-1}$, whose range is that of $G^{-1}A$ since $R$ is invertible.
That range is $G$-orthogonal to $\nullsp(A\T)$: for $v \in \nullsp(A\T)$,
$\langle G^{-1}Aw, v\rangle_G = w\T A\T v = 0$.

(iv) Substituting $J_1 = G^{-1}AR^{-1}$ and $R\T R = A\T G^{-1}A$,
\[
  J_1 J_1\T = G^{-1}A R^{-1} R^{-\top} A\T G^{-1}
            = G^{-1}A (R\T R)^{-1} A\T G^{-1}
            = G^{-1}A (A\T G^{-1}A)^{-1} A\T G^{-1},
\]
and \eqref{eq:reduced} follows from $J_1J_1\T + J_2J_2\T = JJ\T = G^{-1}$.
\end{proof}

The matrix in~\eqref{eq:reduced} is the \emph{reduced inverse Hessian} of the
working set: it inverts $G$ on the feasible subspace $\nullsp(A\T)$ and
annihilates the complement. Carrying $J$ therefore means carrying that object in
factored form, at no cost beyond the $n \times n$ array, which turns the search
directions of Section~\ref{sec:iteration} into single matrix--vector products.

\begin{remark}[Why fold the two factorisations together]\label{rem:fold}
An alternative state is $R_G$ together with an explicit orthogonal factor $Q$ of
the QR factorisation of $R_G^{-\top} A$. That holds the same information, $J$ being $J_0 Q$, and stores no more numbers. But every use of $J$ in
Section~\ref{sec:iteration} would become a triangular solve followed by an
orthogonal transformation, two operations where the folded form has one, and the
solve is the shape that does not reach a tuned kernel. The folded form has $J$
lose its triangularity after the first update, which is exactly the trade: $n^2$
storage and $2n^2$ flops per matrix--vector product, in exchange for the products
being $\texttt{gemv}$ at every iteration whatever the working set does.
\end{remark}

\subsection{The representation is not unique, and does not need to be}

Invariants~\eqref{eq:inv1}--\eqref{eq:inv2} do not determine $J$ and $R$. By
Proposition~\ref{prop:blocks}, $R$ is pinned only up to a left factor
$S = \mathrm{diag}(\pm1)$, whereupon $J_1 = G^{-1}AR^{-1}$ follows; and $J_2$ may
be any $G$-orthonormal basis of $\nullsp(A\T)$, so it is pinned only up to a right
factor $V$ with $V\T V = I_{n-k}$. The freedom is therefore exactly
\begin{equation}
  (J_1, J_2, R) \;\longmapsto\; (J_1 S,\; J_2 V,\; S R),
  \qquad S = \mathrm{diag}(\pm 1),\quad V\T V = I_{n-k}.
  \label{eq:freedom}
\end{equation}
Different orthogonal reductions realise different points of that family: a chain
of Givens rotations and a single Householder reflection annihilating the same
vector produce factors differing in sign, and a differently ordered sequence of
insertions and deletions produces a different $J_2$ outright. The following says
this does not matter, so the implementation may choose its reduction on grounds of
speed alone.

\begin{proposition}[Invariance of the iterates]\label{prop:invariance}
Let $(J,R)$ and $(\hat J,\hat R)$ both satisfy
\eqref{eq:inv1}--\eqref{eq:inv2} for the same $G$ and $A$. Then for every
$n_* \in \R^n$ the quantities
\begin{equation}
  d = J\T n_*, \qquad z = J_2 d_2, \qquad r = R^{-1} d_1
  \label{eq:dzr}
\end{equation}
with $d = (d_1,d_2)$ split at $k$, satisfy $\hat z = z$ and $\hat r = r$.
Moreover, if the two representations are related by~\eqref{eq:freedom} with
$V = I$, the equalities hold exactly in IEEE~754 arithmetic.
\end{proposition}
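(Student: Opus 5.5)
The plan is to show that $z$ and $r$ have closed forms depending only on $G$, $A$ and $n_*$, so that no choice of $(J,R)$ can enter them.

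For $z$, write $z = J_2 d_2 = J_2 J_2\T n_*$, since $d_2 = J_2\T n_*$ is the trailing block of $d = J\T n_*$. Proposition~\ref{prop:blocks}(iv) gives $J_2J_2\T$ in the representation-free form~\eqref{eq:reduced}. Hence
\[
  z = \bigl(G^{-1} - G^{-1}A(A\T G^{-1}A)^{-1}A\T G^{-1}\bigr)n_* ,
\]
and the same expression holds for $\hat z$.

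For $r$, use $d_1 = J_1\T n_*$ together with Proposition~\ref{prop:blocks}(iii), $J_1 = G^{-1}AR^{-1}$. Then
\[
  r = R^{-1}R^{-\top}A\T G^{-1}n_* = (R\T R)^{-1}A\T G^{-1}n_* = (A\T G^{-1}A)^{-1}A\T G^{-1}n_* ,
\]
by Proposition~\ref{prop:blocks}(i). This is again free of the representation, so $\hat r = r$. Both computations need $R$ nonsingular. That follows from $A$ having full column rank, which is the standing assumption under which the dual Hessian is invertible (Proposition~\ref{prop:sub}). Note that this half of the argument never uses the freedom description~\eqref{eq:freedom}, so it covers all pairs satisfying the invariants.

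For the IEEE claim, take $\hat J = [J_1S,\ J_2]$ and $\hat R = SR$ with $S=\mathrm{diag}(\pm1)$. The plan is to check that every floating-point operation in the computed quantities commutes with sign flips, since negation is exact and $\mathrm{fl}(-x\cdot y) = -\mathrm{fl}(x\cdot y)$, and likewise for sums and quotients. The steps, in order:
\begin{enumerate}
  \item $\hat d_1 = S J_1\T n_*$ entrywise: each entry is the same dot product with every term's sign flipped, so the computed value is exactly negated or kept.
  \item $\hat d_2 = d_2$ bit for bit, because $J_2$ is unchanged; hence $\hat z = z$ bit for bit.
  \item For $r$, back-substitution on $\hat R\hat r = \hat d_1$ with $\hat R = SR$ and $\hat d_1 = Sd_1$ scales row $i$ of the system by $s_i$ on both sides. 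Inductively, $\hat r_i = \bigl(s_i d_{1,i} - \sum_{j>i} s_i R_{ij} r_j\bigr)/(s_i R_{ii})$. Numerator and denominator are exactly negated or not, and the quotient rounds identically because rounding is symmetric.
  \item The induction therefore yields $\hat r = r$ exactly.
\end{enumerate}

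The main obstacle is being careful here that the accumulation order is the same in both runs, which I will assume since the same code is run. I will state explicitly that round-to-nearest is sign-symmetric, and that this is what makes the argument exact rather than merely approximate.
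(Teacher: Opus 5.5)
Your proposal is correct and follows the paper's proof. The exact-arithmetic part uses the same closed forms \eqref{eq:rclosed} and \eqref{eq:zclosed} obtained from Proposition~\ref{prop:blocks}; you reach $R\T d_1 = A\T G^{-1}n_*$ through part (iii), where the paper reads it off $A\T J = [R\T\ 0]$. The IEEE part rests on the same exactness of sign changes, and your row-by-row induction on the back substitution with $SR$ is a more faithful account than the paper's factoring $\hat r = R^{-1}S^{-1}Sd_1$, since a triangular solve with $SR$ never actually forms $S^{-1}Sd_1$.
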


\begin{proof}
By Proposition~\ref{prop:blocks}(i) and $A\T J = [R\T\ 0]$ we have
$A\T G^{-1} n_* = A\T J J\T n_* = R\T d_1$, so
\begin{equation}
  r = R^{-1}d_1 = R^{-1}R^{-\top}R\T d_1 = (R\T R)^{-1} A\T G^{-1} n_*
    = \bigl(A\T G^{-1}A\bigr)^{-1} A\T G^{-1} n_* ,
  \label{eq:rclosed}
\end{equation}
which mentions only $A$, $G$ and $n_*$. Likewise $z = J_2 J_2\T n_*$, since
$d_2 = J_2\T n_*$, and by~\eqref{eq:reduced}
\begin{equation}
  z = \Bigl(G^{-1} - G^{-1}A\bigl(A\T G^{-1}A\bigr)^{-1}A\T G^{-1}\Bigr) n_*,
  \label{eq:zclosed}
\end{equation}
again free of the representation. For the last claim, $\hat R = SR$ gives
$\hat d_1 = S d_1$ and $\hat r = (SR)^{-1}Sd_1 = R^{-1}S^{-1}Sd_1$, in which the
only floating-point operations beyond those forming $r$ are multiplications by
$\pm1$; these are exact, so the computed $\hat r$ and $r$ agree bit for bit. The
same argument applies to $\hat z = \sum_{j>k}(s_j J_{\cdot j})(s_j d_j)$.
\end{proof}

\begin{remark}[What the proposition does and does not settle]
It settles that the substitution is exact in the sense that matters: the
iteration's decisions (which constraint is most violated, whether the step is full
or partial, which constraint is dropped) are functions of $z$ and $r$
alone, and those are functions of $(G,A,n_*)$ alone. It does not settle that two
implementations follow the same trajectory in finite precision, because the
rounding \emph{within} the reductions differs and the decisions are
discontinuous functions of the computed values. That is an empirical question,
and settling it properly needs two independent implementations to compare
trajectories against each other. What is checkable against one, and is checked in
the Reproducibility note, is the weaker statement: that the $z$ and $r$ the solver
computes agree with the representation-free closed forms~\eqref{eq:zclosed}
and~\eqref{eq:rclosed} to machine precision.
\end{remark}

%% file: sections/s4_iteration.tex
\section{One iteration}
\label{sec:iteration}

The state at the top of an outer iteration is a working set $\Active$ of size
$k$, the factors $(J,R)$ satisfying~\eqref{eq:inv1}--\eqref{eq:inv2}, the
subproblem solution $x$ and multipliers $u$ of Proposition~\ref{prop:sub}, and
the running objective $f(x)$. The invariant maintained throughout is
\begin{equation}
  \text{$(x,u)$ solves the subproblem~\eqref{eq:sub}, and}
  \quad u_i \ge 0 \ \text{ for every inequality } i \in \Active,
  \label{eq:invariant}
\end{equation}
which by the discussion after Proposition~\ref{prop:sub} is stationarity,
complementarity and dual feasibility together. Only primal feasibility is
missing, and the iteration exists to obtain it.

\subsection{Choosing the entering constraint}

Compute the slacks $s_i = c_i\T x - b_i$ for every $i$. If $s_i \ge 0$ for all
inequalities and $s_i = 0$ for all equalities, then~\eqref{eq:pfeas} holds,
\eqref{eq:invariant} supplies the rest, and $x$ is the solution by
Proposition~\ref{prop:sufficient}. Otherwise a violated constraint is selected as
the entering constraint. The rule is the most violated one, measured relative to
the norm of its own normal:
\begin{equation}
  v_i = \begin{cases} |s_i|, & i \le m_{\mathrm{eq}},\\ -s_i, & i > m_{\mathrm{eq}},\end{cases}
  \qquad
  i_* \in \argmax_i \; \frac{v_i}{\norm{c_i}},
  \qquad\text{terminating when } \max_i \frac{v_i}{\norm{c_i}} \le 0,
  \label{eq:select}
\end{equation}
so that an equality counts as violated in either direction while an inequality
counts only when its slack is negative. The scaling is not cosmetic.

\begin{proposition}[Selection is scale invariant]\label{prop:scale}
Replacing $(c_i, b_i)$ by $(\gamma c_i, \gamma b_i)$ for some $\gamma > 0$ leaves
the feasible set, the solution, and the choice made by~\eqref{eq:select}
unchanged.
\end{proposition}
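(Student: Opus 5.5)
The proof splits into three claims: the feasible set is unchanged, the solution is unchanged, and the choice made by \eqref{eq:select} is unchanged. The first two are immediate and the third is a short computation on the scaled violation.

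\textbf{Feasible set.} Fix the rescaled index $i$ and write $\tilde c_i = \gamma c_i$, $\tilde b_i = \gamma b_i$ with $\gamma > 0$. For every $x$,
\[
  \tilde c_i\T x - \tilde b_i = \gamma\,(c_i\T x - b_i) = \gamma s_i .
\]
Since $\gamma > 0$, this new slack has the same sign as $s_i$ and vanishes exactly when $s_i$ does. So $\tilde c_i\T x = \tilde b_i$ if and only if $c_i\T x = b_i$, and $\tilde c_i\T x \ge \tilde b_i$ if and only if $c_i\T x \ge b_i$, whichever type constraint $i$ has. No other constraint changes, so $\Omega$ is unchanged.

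\textbf{Solution.} The objective $f$ does not involve $C$ or $b$. The problem therefore minimises the same strictly convex function over the same set. By the existence and uniqueness statement of Section~\ref{sec:problem}, it has the same unique solution when $\Omega \ne \emptyset$, and is infeasible in both versions otherwise.

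If one also wants the KKT pair, the multiplier rescales as $\tilde\lambda_i = \lambda_i/\gamma$. Then $\tilde c_i \tilde\lambda_i = c_i\lambda_i$, so \eqref{eq:stat} is preserved; the sign of $\lambda_i$ and complementarity \eqref{eq:comp} are preserved as well. This remark is not needed for the statement.

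\textbf{Selection rule.} The rule \eqref{eq:select} is applied at a given iterate $x$, which we hold fixed.

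\begin{enumerate}
  \item From the slack identity above, $\tilde v_i = \gamma v_i$ in both cases. For an equality, $|\gamma s_i| = \gamma|s_i|$; for an inequality, $-\gamma s_i = \gamma(-s_i)$. Both steps use $\gamma > 0$.
  \item The norm scales the same way: $\norm{\tilde c_i} = \gamma\norm{c_i}$.
  \item Hence $\tilde v_i/\norm{\tilde c_i} = v_i/\norm{c_i}$ for the rescaled index, and every other ratio is untouched.
\end{enumerate}

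So the whole vector of ratios is unchanged. Its argmax set is the same, so the same $i_*$ is chosen under any fixed tie-breaking rule, and the termination test $\max_i v_i/\norm{c_i} \le 0$ gives the same verdict. The case $c_i = 0$ needs no argument, since the ratio is undefined before and after scaling, and any convention adopted for it applies equally to both.

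\textbf{Where care is needed.} The only real point is the quantifier in ``the choice made by~\eqref{eq:select}''. The claim is about the ratios evaluated at the same point $x$, and that is exactly what the computation shows.

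A stronger reading is also true: the whole trajectory is unchanged. This does not follow from the computation at one point. It needs, inductively, that each subproblem solution $x$ stays the same when a working-set normal is rescaled. That holds because the constraint $A\T x = b_\Active$ defines the same affine set after rescaling, so Proposition~\ref{prop:sub} returns the same $x$, with $u_i$ rescaled by $1/\gamma$ and its sign kept. We would state this briefly as a remark rather than prove it.
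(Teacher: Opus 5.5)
Your proof is correct and is the paper's argument: the rescaled slack is $\gamma s_i$ with $\gamma>0$, so the feasible set is unchanged (and hence, since $f$ does not involve $C$ or $b$, so is the unique solution), while $v_i$ and $\norm{c_i}$ both pick up the factor $\gamma$, which leaves every ratio in~\eqref{eq:select} and therefore $i_*$ unchanged. Your extra care over equalities, the zero-column convention and the fixed-iterate reading spells out what the paper's two-line proof leaves implicit.
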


\begin{proof}
The constraint $\gamma c_i\T x \ge \gamma b_i$ has the same solution set as
$c_i\T x \ge b_i$. Its slack scales as $s_i \mapsto \gamma s_i$ and its normal as
$\norm{c_i} \mapsto \gamma\norm{c_i}$, so the ratio in~\eqref{eq:select} is
unchanged, as are the other constraints' ratios.
\end{proof}

\begin{remark}[The invariance is exact, and covers the whole walk]
\label{rem:scaleexact}
Proposition~\ref{prop:scale} claims only that the \emph{choice} is unchanged.
Measured against the implementation over three decades of $\gamma$ either side of
unity, the statement is stronger: the solver performs an identical number of
additions and removals, so the whole trajectory is unchanged and not only its first
step, and returns a minimiser agreeing to $3.3\times10^{-16}$.
\end{remark}

Without the division, a constraint written in basis points rather than in units
would be selected first merely for having been written differently, and the
solver's trajectory, though not its answer, would depend on the caller's
choice of units. Note that the multiplier of the rescaled constraint scales as
$\lambda_i \mapsto \lambda_i/\gamma$, so the sign conditions are unaffected too.

\subsection{The two directions}

Write $n_* = c_{i_*}$ and $b_* = b_{i_*}$, and form $d$, $z$ and $r$
as in~\eqref{eq:dzr}. The following lemma is the computational core of the
method; everything in the rest of this section is a corollary of it.

\begin{lemma}[Properties of the directions]\label{lem:directions}
With $d$, $z$, $r$ as in~\eqref{eq:dzr},
\begin{enumerate}
  \item[(i)] $A\T z = 0$: moving along $z$ preserves the working-set equalities;
  \item[(ii)] $n_*\T z = \norm{d_2}^2 = z\T G z \ge 0$, with equality iff $z = 0$;
  \item[(iii)] $G z = n_* - A r$;
  \item[(iv)] $z = 0$ if and only if $n_* = A r$, i.e.\ iff $n_*$ lies in the span of
        the working-set normals, in which case $r$ holds its coordinates there.
\end{enumerate}
\end{lemma}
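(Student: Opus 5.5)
The plan is to work entirely from the two invariants \eqref{eq:inv1}--\eqref{eq:inv2} and the block facts of Proposition~\ref{prop:blocks}, never from the closed forms. The most useful single identity is $J\T G J = I$ read as $G J = J^{-\top}$. Recall that $d = J\T n_*$, $z = J_2 d_2$ and $r = R^{-1}d_1$.

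For (i), I will use $A\T J_2 = (J_2\T A)\T = 0$, which is the lower block of \eqref{eq:inv2}. This gives $A\T z = A\T J_2 d_2 = 0$ at once.

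For (ii), there are two short computations:
\[
  z\T G z = d_2\T J_2\T G J_2 d_2 = \norm{d_2}^2,
\]
since $J_2\T G J_2 = I_{n-k}$ is the trailing block of \eqref{eq:inv1}, and
\[
  n_*\T z = n_*\T J_2 d_2 = (J_2\T n_*)\T d_2 = \norm{d_2}^2 .
\]
Nonnegativity is then immediate. For the equality case, $z\T G z = 0$ forces $z = 0$ because $G \succ 0$, and conversely $z = 0$ clearly gives zero.

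Part (iii) is the only step with content, and it needs $n_*$ expressed back in the $J$ basis. From $J^{-\top} = G J$ I will write
\[
  n_* = J^{-\top} d = G J d = G J_1 d_1 + G J_2 d_2 = G J_1 d_1 + G z .
\]
Proposition~\ref{prop:blocks}(iii) gives $J_1 = G^{-1} A R^{-1}$, so $G J_1 d_1 = A R^{-1} d_1 = A r$. Rearranging yields $G z = n_* - A r$.

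This relies on $R$ being nonsingular, equivalently $A$ having full column rank. That is the standing hypothesis under which the state is defined (Proposition~\ref{prop:sub} and Proposition~\ref{prop:blocks}), so I will state that it is assumed here as well. Checking this hypothesis is the main obstacle I anticipate; the rest is bookkeeping with the block structure.

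For (iv), the forward direction is immediate: if $z = 0$, then (iii) gives $n_* = A r$. For the converse, suppose $n_* = A w$ for some $w$. Then
\[
  d = J\T A w = \begin{bmatrix} R w \\ 0 \end{bmatrix},
\]
so $d_2 = 0$, hence $z = 0$, and moreover $r = R^{-1} R w = w$. Thus $r$ gives the coordinates of $n_*$ in the working-set normals, and by full column rank these coordinates are unique.
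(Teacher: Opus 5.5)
Your proof is correct. Parts (i)--(ii) match the paper's; the paper gets the equality case from the full column rank of $J_2$ and you get it from $G \succ 0$, and either works. Parts (iii) and (iv) take a different route.

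For (iii), the paper goes through the representation-free closed forms. It writes $Gz = GJ_2J_2\T n_*$, substitutes the reduced inverse Hessian \eqref{eq:reduced}, and recognises the subtracted term as $Ar$ via \eqref{eq:rclosed}. You stay inside the basis instead. You expand $n_* = J^{-\top}d = GJ_1d_1 + GJ_2d_2$ and use $GJ_1 = AR^{-1}$, which is \eqref{eq:inv2} inverted exactly as in the proof of Proposition~\ref{prop:blocks}(iii). You never form $(A\T G^{-1}A)^{-1}$, and (iii) becomes the statement that the two blocks of $d$ are the two pieces of $n_*$. The paper's route ties $z$ and $r$ to the closed forms that the projection and least-squares readings after the lemma rely on. Yours is shorter and needs only the invariants.

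In (iv) you swap which direction uses (iii). Your converse also starts from an arbitrary representation $n_* = Aw$, giving $d = [Rw;0]$, hence $z = 0$ and $r = w$. That is slightly more general than the paper's converse, which assumes $n_* = Ar$ itself. It proves the span characterisation and the coordinate claim explicitly, whereas the paper leaves implicit that any such $w$ must equal $r$.

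Finally, nonsingularity of $R$ is not something you need to check as an obstacle. It is presupposed by the definition $r = R^{-1}d_1$ in \eqref{eq:dzr}, and Proposition~\ref{prop:rank} maintains it along the iteration.
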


\begin{proof}
(i) $A\T z = A\T J_2 d_2 = (J_2\T A)\T d_2 = 0$ by~\eqref{eq:inv2}.

(ii) $n_*\T z = n_*\T J_2 d_2 = (J_2\T n_*)\T d_2 = d_2\T d_2$. For the second
equality, $z\T G z = d_2\T (J_2\T G J_2) d_2 = d_2\T d_2$ by the trailing block
of~\eqref{eq:inv1}. Since $J_2$ has full column rank, $z = 0$ iff $d_2 = 0$.

(iii) By~\eqref{eq:reduced}, $Gz = GJ_2J_2\T n_* = n_* - A(A\T G^{-1}A)^{-1}A\T
G^{-1}n_*$, and the second term is $Ar$ by~\eqref{eq:rclosed}.

(iv) If $d_2 = 0$ then $J\T n_* = (d_1,0) = [R;0]R^{-1}d_1 = (J\T A) r$, and $J$
is nonsingular, so $n_* = Ar$. Conversely $n_* = Ar$ gives
$Gz = n_* - Ar = 0$ by (iii), hence $z = 0$.
\end{proof}

Part (ii) identifies $z$ as an ascent direction for the entering constraint's
slack: the slack $n_*\T x - b_*$ grows at rate $n_*\T z > 0$ per unit step. Part
(i) says the step costs nothing in working-set feasibility. Together with the
closed forms~\eqref{eq:zclosed} and~\eqref{eq:rclosed} the reading is:
\begin{itemize}
  \item $z$ is the $G$-orthogonal projection of the unconstrained direction
        $G^{-1}n_*$ onto the working set's feasible subspace $\nullsp(A\T)$.
        Indeed $\Pi := J_2 J_2\T G$ is idempotent with range $\nullsp(A\T)$ and is
        self-adjoint for $\langle\cdot,\cdot\rangle_G$, and
        $z = \Pi\, G^{-1}n_*$.
  \item $r$ solves a least-squares problem in the $G^{-1}$ metric. By
        \eqref{eq:rclosed},
        \begin{equation}
          r = \argmin_{y \in \R^k} \; \norm{J\T A y - J\T n_*}_2
            = \argmin_{y \in \R^k} \; (Ay - n_*)\T G^{-1} (A y - n_*),
          \label{eq:lsq}
        \end{equation}
        the best approximation of the entering normal by the working-set normals.
        Its residual, mapped through $G^{-1}$, is $z$: from
        Lemma~\ref{lem:directions}(iii), $z = G^{-1}(n_* - Ar)$.
\end{itemize}

\begin{remark}[Why not call a least-squares routine]
Equation~\eqref{eq:lsq} is a genuine least-squares problem and it is tempting to
hand it to a library. The temptation should be resisted: the factor $R$ of its
normal equations is already held, so~\eqref{eq:dzr} costs one triangular solve,
where a library call would refactorise $J\T A$ from scratch at $O(nk^2)$ every
iteration. The same remark applies to~\eqref{eq:subsol}: the closed form is the
specification, not the implementation.
\end{remark}

\subsection{The affine path}

Fix the directions and consider moving along them. Let $u_*$ denote the entering
constraint's own multiplier, which starts at $0$.

\begin{proposition}[Dual feasibility along the path]\label{prop:path}
For $t \in \R$ define
\begin{equation}
  x(t) = x + t z, \qquad u(t) = u - t r, \qquad u_*(t) = u_* + t .
  \label{eq:path}
\end{equation}
Then for all $t$: $A\T x(t) = b_\Active$, and stationarity holds for the working
set augmented by the entering constraint,
\begin{equation}
  G x(t) - a = A\, u(t) + u_*(t)\, n_* .
  \label{eq:pathstat}
\end{equation}
\end{proposition}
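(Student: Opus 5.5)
Both claims are affine in $t$, so the plan is to verify each at $t=0$ and then check that the $t$-derivatives agree. Everything needed is already in hand: the invariant~\eqref{eq:invariant} gives the value at $t=0$, and Lemma~\ref{lem:directions} gives the directional identities.

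For the working-set equalities, $A\T x(t) = A\T x + t\,A\T z$. The first term is $b_\Active$, because $(x,u)$ solves the subproblem~\eqref{eq:sub} by~\eqref{eq:invariant}. The second term vanishes by Lemma~\ref{lem:directions}(i). So $A\T x(t) = b_\Active$ for every real $t$, with no sign restriction on $t$.

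For stationarity, I would expand the left side of~\eqref{eq:pathstat} as $Gx(t) - a = (Gx - a) + t\,Gz$. Stationarity of the subproblem (the first line of the proof of Proposition~\ref{prop:sub}) gives $Gx - a = Au$. Lemma~\ref{lem:directions}(iii) gives $Gz = n_* - Ar$. Hence
\[
  Gx(t) - a = Au + t(n_* - Ar) = A(u - tr) + t\,n_* .
\]
This equals $A\,u(t) + u_*(t)\,n_*$ provided $u_* = 0$ at the base point. The setup states exactly this: the entering multiplier starts at $0$, since the entering constraint is not yet in the working set.

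The only point needing care is the meaning of $u_*$ in~\eqref{eq:path}. If the proposition is also meant to cover a later partial step, where $u_*$ is already positive, then the base-point identity has to be $Gx - a = Au + u_* n_*$ rather than $Gx - a = Au$. In that case $x$ is not the pure subproblem solution for $\Active$, and $z$, $r$ are still computed from $(J,R)$ for $\Active$ alone. I would state the proof under the hypothesis $Gx - a = Au + u_* n_*$, which holds trivially with $u_* = 0$ at the top of the iteration. The inductive step then goes through unchanged, because the increment $t(Gz + Ar - n_*)$ is zero by Lemma~\ref{lem:directions}(iii) whatever the base point is. This makes the identity hold along every segment of the path within one outer iteration.
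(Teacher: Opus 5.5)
Your proof is correct and is the paper's argument. The paper takes the base-point identity $Gx - a = Au + u_* n_*$ directly, which is exactly the hypothesis you settle on and which covers later partial steps with $u_* \neq 0$. It adds $t\,Gz = t(n_* - Ar)$ from Lemma~\ref{lem:directions}(iii), and gets the first claim from Lemma~\ref{lem:directions}(i).

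One thing your inductive remark leaves implicit: after a partial step the base-point identity carries over to the reduced working set because the dropped constraint's multiplier is exactly zero at $t = t_1$. This goes beyond the statement, and the paper's proof does not address it either.
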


\begin{proof}
The first claim is Lemma~\ref{lem:directions}(i). For the second, stationarity at
$t = 0$ reads $Gx - a = Au + u_* n_*$, and by Lemma~\ref{lem:directions}(iii),
\[
  Gx(t) - a = (Gx - a) + t\,Gz
  = A u + u_* n_* + t(n_* - Ar)
  = A(u - tr) + (u_* + t) n_* . \qedhere
\]
\end{proof}

So the entire path is stationary, with the entering multiplier rising at unit rate
and the working-set multipliers falling linearly along $-r$. Complementarity holds
for every constraint except the entering one, whose slack is still nonzero while
its multiplier is positive; that single violation is what the step is taken to
remove. Two limits on $t$ follow immediately.

\paragraph{The dual limit $t_1$.} Dual feasibility requires $u_i(t) \ge 0$ for
every inequality $i \in \Active$. Since $u_i(t) = u_i - t r_i$ and $u_i \ge 0$, the
binding indices are those with $r_i > 0$, and
\begin{equation}
  t_1 = \min\Bigl\{\, \frac{u_i}{r_i} \;:\; i \in \Active,\ i > m_{\mathrm{eq}},\
  r_i > 0 \,\Bigr\},
  \qquad
  i_{\mathrm{del}} = \text{an index attaining it},
  \label{eq:t1}
\end{equation}
with $t_1 = +\infty$ when the set is empty. Equality constraints are excluded:
their multipliers are unrestricted in sign, so no step in $t$ can make them
infeasible. This is the classical ratio test, and $t_1$ is the largest step that
preserves~\eqref{eq:invariant}.

\paragraph{The primal limit $t_2$.} The entering constraint's slack is
$n_*\T x(t) - b_* = s_{i_*} + t\, n_*\T z$ with $s_{i_*} < 0$, so it reaches zero
at
\begin{equation}
  t_2 = \frac{|s_{i_*}|}{n_*\T z} = \frac{|s_{i_*}|}{\norm{d_2}^2},
  \label{eq:t2}
\end{equation}
finite and positive whenever $z \neq 0$, and $t_2 = +\infty$ when $z = 0$ by
Lemma~\ref{lem:directions}(ii), in which case the primal iterate cannot move at all
and only the multipliers change.

\paragraph{The step.} Take $t = \min(t_1,t_2)$. If $t_2 \le t_1$ the step is
\emph{full}: the entering constraint now holds with equality, so it joins the
working set with multiplier $u_* + t_2$ and the factorisation is updated
(Section~\ref{sec:updates}). The new state satisfies~\eqref{eq:invariant}:
stationarity and complementarity by Proposition~\ref{prop:path} with the slack now
zero, and the sign conditions because $t_2 \le t_1$. If $t_1 < t_2$ the step is
\emph{partial}: the multiplier of $i_{\mathrm{del}}$ reaches zero first, that
constraint leaves the working set, the factorisation is downdated, and $z$ and $r$
are recomputed, both having changed with $J$ and $R$, keeping the same
entering constraint and its accumulated $u_*$, and the loop repeats. If both
limits are infinite the problem is infeasible; see
Proposition~\ref{prop:farkas}.

\begin{remark}[Equalities violated from above]\label{rem:reverse}
An equality constraint may be violated in either direction. When $s_{i_*} > 0$
the slack must be \emph{decreased}, so the step is taken along $-z$: formally
$t$ ranges over the negative reals, $t_2 = -|s_{i_*}|/n_*\T z$, the ratio test in
\eqref{eq:t1} runs against $-r$ rather than $r$, and the entering multiplier
accumulates negatively. Nothing else changes; in particular
Proposition~\ref{prop:path} was stated for all $t \in \R$ precisely so that this
case needs no separate treatment, and Proposition~\ref{prop:increment} below
covers it too.
\end{remark}

\subsection{The objective, in closed form}

The running objective is not re-evaluated from $x$; it is advanced by an exact
increment.

\begin{proposition}[Objective increment]\label{prop:increment}
Along the path~\eqref{eq:path},
\begin{equation}
  f(x(t)) - f(x) = t\Bigl(\frac{t}{2} + u_*\Bigr)\, n_*\T z ,
  \label{eq:increment}
\end{equation}
which is strictly positive whenever $z \neq 0$ and $t \neq 0$ and $t$, $u_*$ have
the same sign (in particular whenever $t > 0$ and $u_* \ge 0$).
\end{proposition}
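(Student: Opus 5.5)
The plan is to expand $f$ exactly along the line $x(t) = x + tz$, use stationarity at $t=0$ to identify the linear coefficient, and read off the sign.

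\emph{Expansion.} Since $f$ is quadratic, the expansion has no remainder:
\[
  f(x+tz) - f(x) = t\,(Gx - a)\T z + \tfrac{t^2}{2}\, z\T G z .
\]

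\emph{Quadratic term.} By Lemma~\ref{lem:directions}(ii), $z\T G z = n_*\T z$, so the quadratic term is already $\tfrac{t^2}{2}\, n_*\T z$.

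\emph{Linear term.} Use stationarity at the base point in the form of Proposition~\ref{prop:path} at $t=0$:
\[
  Gx - a = Au + u_* n_* .
\]
This form covers both the start of an outer iteration, where $u_*=0$, and the state after one or more partial steps, where $u_*$ has accumulated. Hence
\[
  (Gx - a)\T z = u\T A\T z + u_*\, n_*\T z = u_*\, n_*\T z ,
\]
because $A\T z = 0$ by Lemma~\ref{lem:directions}(i). Adding the two terms and factoring gives $t\bigl(\tfrac t2 + u_*\bigr)\, n_*\T z$, which is \eqref{eq:increment}.

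\emph{A point of care.} After a partial step the working set $\Active$ and the vector $u$ have changed. The identity above must therefore be applied with the current $A$, $u$ and $u_*$ at the top of each step. Proposition~\ref{prop:path} holds for any state satisfying $Gx - a = Au + u_*n_*$, so the argument goes through unchanged. This is the one step needing care; the rest is algebra.

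\emph{Sign.} If $z \neq 0$, Lemma~\ref{lem:directions}(ii) gives $n_*\T z = \norm{d_2}^2 > 0$. If $t$ and $u_*$ have the same sign (with $t \neq 0$), then $t/2$ and $u_*$ have the same sign as $t$, so $\tfrac t2 + u_*$ is nonzero with the sign of $t$. Then $t\bigl(\tfrac t2 + u_*\bigr) > 0$ and the increment is strictly positive.

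The case $t>0$, $u_* \ge 0$ is the forward direction, and this sign reasoning already allows $u_*=0$. The reverse-equality case of Remark~\ref{rem:reverse} has $t<0$ and $u_* \le 0$, and the same argument applies.
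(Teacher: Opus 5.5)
Your proposal is correct and follows the paper's proof essentially step for step: the exact quadratic expansion, Lemma~\ref{lem:directions}(ii) for the quadratic term, stationarity $Gx - a = Au + u_* n_*$ together with Lemma~\ref{lem:directions}(i) for the linear term, and the same sign argument. Your note that after a partial step the identity must be applied with the current $A$, $u$ and accumulated $u_*$ is a sound clarification that the paper leaves implicit.
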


\begin{proof}
Expanding the quadratic,
$f(x + tz) = f(x) + t\, z\T(Gx - a) + \tfrac12 t^2\, z\T G z$. For the linear
term, stationarity at $t = 0$ and Lemma~\ref{lem:directions}(i) give
$z\T(Gx - a) = z\T A u + u_*\, z\T n_* = u_*\, n_*\T z$. For the quadratic term,
Lemma~\ref{lem:directions}(ii) gives $z\T G z = n_*\T z$. Hence
$f(x+tz) - f(x) = t u_* n_*\T z + \tfrac12 t^2 n_*\T z$, which
is~\eqref{eq:increment}. Positivity: $n_*\T z > 0$ by
Lemma~\ref{lem:directions}(ii), and $t(t/2 + u_*) > 0$ when $t$ and $u_*$ share a
sign and $t \neq 0$.
\end{proof}

The increment~\eqref{eq:increment} is exact, so the objective can be carried as a
running total at the cost of one multiplication per step instead of an $O(n^2)$
re-evaluation of the quadratic. It is expressed in quantities the iteration
already holds, $n_*\T z$ being needed for $t_2$ anyway. And it handles the
reversed step of Remark~\ref{rem:reverse} without a
special case: there $t < 0$ and $u_*$ accumulates negatively, so
$t/2 + u_* < 0$ and the product with $t$ is again positive. The value therefore
increases on every nonzero step, in both directions, which is the monotonicity
Section~\ref{sec:termination} needs.

\begin{algorithm}[ht]
\caption{The Goldfarb--Idnani dual active-set method}\label{alg:gi}
\begin{algorithmic}[1]
\Require $G \succ 0$, $a$, $C$, $b$, $m_{\mathrm{eq}}$
\State $J \leftarrow \mathrm{chol}(G)^{-1}$; \ $x \leftarrow G^{-1}a$; \
       $f \leftarrow -\tfrac12 a\T x$; \ $\Active \leftarrow \emptyset$; \
       $k \leftarrow 0$; \ $u \leftarrow ()$
\Loop
  \State $s \leftarrow C\T x - b$, with $s_i \leftarrow 0$ for $i \in \Active$;
         choose $i_*$ by~\eqref{eq:select}
  \State \textbf{if} none \textbf{then} \Return $(x,f,u)$
         \Comment{optimal, by Prop.~\ref{prop:sufficient}}
  \State $n_* \leftarrow c_{i_*}$; \ $u_* \leftarrow 0$
  \Loop
    \State $d \leftarrow J\T n_*$; \ $z \leftarrow J_2 d_2$; \
           $r \leftarrow R^{-1} d_1$
    \State $t_1, i_{\mathrm{del}} \leftarrow$ \eqref{eq:t1}; \
           $t_2 \leftarrow$ \eqref{eq:t2};
           \textbf{if} both infinite \textbf{then stop}: infeasible
    \State $t \leftarrow \min(t_1,t_2)$; \ $x \leftarrow x + tz$; \
           $f \leftarrow f + t(t/2 + u_*)\,n_*\T z$; \
           $u \leftarrow u - tr$; \ $u_* \leftarrow u_* + t$
    \If{$t_2 \le t_1$}
      \State $\Active \leftarrow \Active \cup \{i_*\}$, $u_{k+1} \leftarrow u_*$,
             $k \leftarrow k+1$; insert (\S\ref{ssec:insert}); \textbf{break}
    \Else
      \State $\Active \leftarrow \Active \setminus \{i_{\mathrm{del}}\}$,
             $k \leftarrow k-1$; delete (\S\ref{ssec:delete});
             $s_{i_*} \leftarrow n_*\T x - b_*$
    \EndIf
  \EndLoop
\EndLoop
\end{algorithmic}
\end{algorithm}

%% file: sections/s5_termination.tex
\section{Termination}
\label{sec:termination}

Three questions are separate and are best kept so: does the inner loop stop, does
the outer loop stop, and what does it mean when the iteration gets stuck.

\subsection{The inner loop}

\begin{proposition}[The inner loop terminates]\label{prop:inner}
For a fixed entering constraint with $n_* \neq 0$, the inner loop performs at
most $k+1$ passes, where $k$ is the size of the working set on entry.
\end{proposition}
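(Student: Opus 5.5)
The argument counts passes. Each pass of the inner loop of Algorithm~\ref{alg:gi} either exits the loop or removes a constraint from the working set, and nothing is ever added inside it. I will show that a pass which does not exit is a partial step $t_1 < t_2$, so it deletes exactly one index $i_{\mathrm{del}}$ and shrinks $k$ by one. The working set on entry has $k$ members, so at most $k$ partial steps can occur, and the pass after the last one must end the loop. That bounds the passes by $k+1$.

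What must be ruled out is a non-exiting pass that deletes nothing. There are three outcomes to handle.

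\emph{Full step, $t_2 \le t_1$.} The pass inserts $i_*$ and breaks.

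\emph{Both limits infinite.} The method stops with an infeasibility verdict (Proposition~\ref{prop:farkas}), so the loop ends here too.

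\emph{Partial step, $t_1 < t_2$.} Here $t_1$ is finite. By~\eqref{eq:t1} it is attained at an inequality index $i_{\mathrm{del}} \in \Active$ with $r_{i_{\mathrm{del}}} > 0$, so there really is a constraint to delete and the working set shrinks by one.

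Two side facts are needed for this to be well defined.

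First, after a deletion the invariants~\eqref{eq:inv1}--\eqref{eq:inv2} must hold again for the smaller working set, and the state must still satisfy~\eqref{eq:invariant} with the multiplier of $i_{\mathrm{del}}$ equal to zero. This is what lets the next pass recompute $d$, $z$ and $r$ and apply the same case analysis. The invariants come from the downdate of \S\ref{ssec:delete}, which I assume. The multiplier conditions follow from Proposition~\ref{prop:path} and the choice $t = t_1$. Stationarity survives because the dropped multiplier is zero, so its column drops out of~\eqref{eq:pathstat}, and the entering constraint keeps the accumulated $u_*$.

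Second, once the working set is empty, $r$ is an empty vector and $t_1 = +\infty$. The next pass then cannot be partial: it is either a full step or the infeasible stop. This is exactly why the bound is $k+1$ and not $k$.

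The main obstacle is the case of an empty working set. With $k = 0$ we have $z = G^{-1}n_*$, and $z \neq 0$ because $n_* \neq 0$. Hence $t_2 < \infty$ by~\eqref{eq:t2}, and the pass ends in a full step. This is where the hypothesis $n_* \neq 0$ enters; without it, $z = 0$ would give the infinite/infinite stop.

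More generally, I would check that a ratio-test tie, $t_1 = t_2$, is resolved as a full step, which the rule $t_2 \le t_1$ does. A tie therefore never triggers a deletion, and every non-exiting pass strictly shrinks the working set. The reversed direction of Remark~\ref{rem:reverse} is handled the same way, with $r$ replaced by $-r$ in the ratio test.
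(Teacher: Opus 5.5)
Your proposal is correct and follows the paper's proof. Each non-exiting pass is a partial step that deletes one constraint, so after at most $k$ such passes the working set is empty; there $t_1 = +\infty$ and $z = G^{-1}n_* \neq 0$ makes $t_2$ finite, which forces a full step. The extra bookkeeping you supply (resolving the tie $t_1 = t_2$ as a full step, counting the infeasibility stop as an exit, and restoring the invariants after a deletion) is only implicit in the paper but is consistent with it.
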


\begin{proof}
Every pass that does not exit performs a partial step and removes one constraint
from the working set, so after at most $k$ of them the working set is empty. With
$\Active = \emptyset$ we have $J_2 = J$ and hence, by
Lemma~\ref{lem:directions}(ii) and~\eqref{eq:inv1},
$n_*\T z = z\T G z$ with $z = J J\T n_* = G^{-1}n_*$, so
$n_*\T z = n_*\T G^{-1} n_* > 0$ because $G^{-1} \succ 0$ and $n_* \neq 0$. Thus
$z \neq 0$, $t_2$ is finite by~\eqref{eq:t2}, and $t_1 = +\infty$ because the
ratio test~\eqref{eq:t1} ranges over an empty set. The step is full and the loop
exits.
\end{proof}

The excluded case $n_* = 0$ is a column of $C$ that is identically zero. Such a
constraint reads $0 \ge b_*$: no $x$ influences it, so it is either vacuous
($b_* \le 0$) or renders the problem infeasible outright. The implementation
scores it as infinitely violated when $b_* > 0$, which routes it to the
infeasibility verdict below instead of to a division by zero
in~\eqref{eq:select}.

\subsection{The outer loop}

\begin{proposition}[Strict ascent and finiteness]\label{prop:ascent}
Every outer iteration ending in a full step with $t_2 \neq 0$ strictly increases
the objective, hence by Proposition~\ref{prop:dual} the dual value $\psi$. If every
full step has $t_2 > 0$, the method terminates after finitely many outer
iterations.
\end{proposition}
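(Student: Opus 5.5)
The plan is to prove the two claims separately: strict ascent by summing the increments of Proposition~\ref{prop:increment} over one outer iteration, and finiteness from the fact that a strictly increasing value cannot revisit a working set.

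\textbf{Strict ascent.} An outer iteration is a sequence of partial steps followed by one full step, all with the same entering constraint. Its multiplier $u_*$ starts at $0$ and accumulates the step lengths. In the forward case each step length is $t \ge 0$, so $u_*$ stays nonnegative throughout. In the reversed case of Remark~\ref{rem:reverse} every $t \le 0$ and $u_*$ stays nonpositive. Hence at every pass $t$ and $u_*$ share a sign, and by \eqref{eq:increment} each pass contributes a nonnegative increment.

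For the final full step, $t_2 \neq 0$ and the finiteness of $t_2$ force $z \neq 0$ by \eqref{eq:t2}. Proposition~\ref{prop:increment} then makes its increment strictly positive. Partial steps with $z=0$ move only multipliers and leave $f$ unchanged, so they do no harm. Summing over the passes, $f$ strictly increases across the outer iteration.

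Since every state reached satisfies stationarity and complementarity on its working set, Proposition~\ref{prop:dual} gives $\psi = f$ at the states before and after. So $\psi$ increases by the same amount. One point needs care: after a partial step the state is stationary for the shrunken working set augmented by $n_*$, not of the form \eqref{eq:subsol}. This is harmless, because only the endpoints of the outer iteration are compared.

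\textbf{Finiteness.} At the end of each outer iteration the state is exactly the subproblem solution of Proposition~\ref{prop:sub} for the current working set $\Active$. Its value $f(x_\Active)$ is therefore a function of $\Active$ alone. This requires the normals of $\Active$ to have full column rank. I would justify that from the step rules: an entering normal with $z\neq0$ lies outside the span of the working normals by Lemma~\ref{lem:directions}(iv), so each full step preserves full rank. Only full steps add constraints, and these have $z \neq 0$, so rank is never lost.

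Under the hypothesis that every full step has $t_2>0$, the first part shows the sequence of end-of-iteration values is strictly increasing. So no working set recurs. There are at most $2^m$ subsets of $\{1,\dots,m\}$, which bounds the number of outer iterations. Each outer iteration is itself finite by Proposition~\ref{prop:inner}, or stops with the infeasibility verdict.

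\textbf{Main obstacle.} The delicate step is identifying the end-of-outer-iteration value with a function of the working set. It rests on the full-rank claim, which is exactly Proposition~\ref{prop:rank} stated later. I would prove the needed special case inline as above, via Lemma~\ref{lem:directions}(iv) and the observation that only full steps with $z \neq 0$ insert constraints.
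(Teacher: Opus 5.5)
Your proposal is correct and follows the paper's proof essentially step for step. You sum the increments of Proposition~\ref{prop:increment} over the inner passes, with $t$ and the accumulated $u_*$ sharing a sign and the final term nonzero because $t_2 \neq 0$. You then use that the end-of-iteration value is a function of the working set alone, so a strictly increasing value forbids any working set from recurring among finitely many subsets. You are slightly more careful than the paper in two places. It calls every pass's increment positive, where passes with $z=0$ or $t=0$ in fact contribute zero. It also leaves implicit the full-column-rank condition (the content of Proposition~\ref{prop:rank}) that Proposition~\ref{prop:sub} needs.
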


\begin{proof}
The iteration's total increment is the sum of~\eqref{eq:increment} over its inner
passes, each positive by Proposition~\ref{prop:increment}, since the accumulated $u_*$ and
each step $t$ share a sign throughout by Remark~\ref{rem:reverse}, and the
final term is nonzero because $t_2 \neq 0$. For the second claim, a working set
determines $(x,u)$ uniquely by Proposition~\ref{prop:sub}, hence determines the
objective; that value strictly increases from one outer iteration to the next, so
no working set recurs. There are finitely many subsets of $\{1,\dots,m\}$, and the
iteration stops only by the optimality test of~\eqref{eq:select} or by the
infeasibility verdict.
\end{proof}

\begin{remark}[Degeneracy]\label{rem:degeneracy}
The hypothesis fails exactly when a full step of length zero occurs, which requires
$s_{i_*} = 0$: more constraints pass through $x$ than the working set holds. A zero
step then adds one without changing $x$, $u$ or the objective, the ascent argument
goes silent, and a cycle is not excluded. Goldfarb and Idnani~\cite{goldfarb1983}
discuss this; the classical remedies are lexicographic or least-index rules. In
finite precision the question changes character, since a slack of exactly zero is
not what one observes.
\end{remark}

\subsection{Infeasibility is a certificate, not a failure}

The interesting case is a stuck iteration: $z = 0$, so the primal cannot move, and
$t_1 = \infty$, so no multiplier limits the step. The classical reading is that
the dual is unbounded, and therefore the primal infeasible. That argument needs
care: by Proposition~\ref{prop:path} the whole ray is dual feasible, and
by~\eqref{eq:increment} with $z = 0$ the value is constant. The cleanest
statement avoids the dual entirely: the vector $r$ that the solver already holds
is a Farkas certificate.

\begin{proposition}[Farkas certificate]\label{prop:farkas}
Suppose the iteration reaches a state in which the entering inequality
constraint $i_*$ is violated, $s_{i_*} < 0$, and
\begin{enumerate}
  \item[(a)] $z = 0$, and
  \item[(b)] $r_i \le 0$ for every inequality $i \in \Active$.
\end{enumerate}
Then the feasible set $\Omega$ is empty.
\end{proposition}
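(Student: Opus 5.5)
By Lemma~\ref{lem:directions}(iv), hypothesis (a) gives $n_* = A r$: the entering normal is a linear combination of the working-set normals, with coefficients $r$. The plan is to argue by contradiction. Take any $\tilde x \in \Omega$, pair it against this identity, and use the working-set equalities $A\T x = b_\Active$ at the current iterate to reach an inequality that cannot hold.

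Assume $\tilde x \in \Omega$. Then
\[
  n_*\T \tilde x = r\T A\T \tilde x = \sum_{i\in\Active} r_i\, c_i\T \tilde x .
\]
Split the sum by constraint type.
\begin{itemize}
  \item For an equality $i \le m_{\mathrm{eq}}$, feasibility gives $c_i\T\tilde x = b_i$, whatever the sign of $r_i$.
  \item For an inequality $i > m_{\mathrm{eq}}$, feasibility gives $c_i\T\tilde x \ge b_i$, and (b) gives $r_i \le 0$, so $r_i c_i\T\tilde x \le r_i b_i$.
\end{itemize}
Hence $n_*\T\tilde x \le r\T b_\Active$.

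On the other side, the current iterate satisfies $A\T x = b_\Active$, since it solves the subproblem~\eqref{eq:sub}. Therefore
\[
  r\T b_\Active = r\T A\T x = (Ar)\T x = n_*\T x = b_* + s_{i_*} < b_* .
\]
Combining the two gives $n_*\T \tilde x < b_*$. But $i_*$ is an inequality constraint, so $\tilde x \in \Omega$ requires $n_*\T\tilde x \ge b_*$. This is a contradiction, so $\Omega = \emptyset$.

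Equivalently, the vector $\lambda$ with entries $-r_i$ on $\Active$, $+1$ on $i_*$, and zero elsewhere satisfies $C\lambda = 0$ and $b\T\lambda > 0$, with $\lambda \ge 0$ on the inequalities. This is the Farkas alternative, and it is worth stating so that the certificate is explicit.

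The step needing the most care is the bookkeeping of $x$ in the state where the proposition is applied. The iterate $x$ must satisfy $A\T x = b_\Active$ for the current working set, and $s_{i_*}$ must be the slack at that same $x$. Partial steps inside the inner loop move $x$ along $z$ and delete constraints, so I would check two things. First, by Proposition~\ref{prop:path}, $A\T x(t) = b_\Active$ is preserved along the path. Second, after a deletion the working-set equalities still hold for the smaller set, and Algorithm~\ref{alg:gi} recomputes $s_{i_*}$ at the new $x$. With those checks in place, the identity $r\T b_\Active = n_*\T x$ is valid and the rest is the two-line computation above. Subproblem optimality is not needed here, only the working-set equalities.
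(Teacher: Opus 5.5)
Your proof is correct and follows the paper's argument essentially line for line. You use Lemma~\ref{lem:directions}(iv) to get $n_* = Ar$, then split the working set into equality and inequality members to get $n_*\T\tilde x \le r\T b_\Active$. Finally you use the working-set equalities at the current iterate to get $r\T b_\Active = n_*\T x = b_* + s_{i_*} < b_*$. Your explicit vector, with entries $-r_i$ on $\Active$ and $1$ on $i_*$ (so that $C\lambda = 0$ and $b\T\lambda = -s_{i_*} > 0$), correctly writes out the certificate that the paper describes only in words after its proof.
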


\begin{proof}
By (a) and Lemma~\ref{lem:directions}(iv), $n_* = A r = \sum_{i \in \Active} r_i
c_i$. Let $\tilde x \in \Omega$ be any feasible point. Splitting the working set
into its equality part $\mathcal{E}$ and inequality part $\mathcal{I}$,
\[
  n_*\T \tilde x
  = \sum_{i \in \mathcal{E}} r_i\, c_i\T \tilde x
    + \sum_{i \in \mathcal{I}} r_i\, c_i\T \tilde x
  \;\le\; \sum_{i \in \mathcal{E}} r_i b_i + \sum_{i \in \mathcal{I}} r_i b_i
  \;=\; r\T b_\Active ,
\]
where the equality members contribute $c_i\T\tilde x = b_i$ exactly, and each
inequality member contributes $r_i c_i\T \tilde x \le r_i b_i$ because
$c_i\T\tilde x \ge b_i$ and $r_i \le 0$ by (b). On the other hand the current
iterate $x$ satisfies $A\T x = b_\Active$, so
\[
  r\T b_\Active = r\T A\T x = n_*\T x = s_{i_*} + b_* < b_* .
\]
Combining, $n_*\T\tilde x < b_*$, contradicting $\tilde x \in \Omega$. Hence
$\Omega = \emptyset$.
\end{proof}

The proposition says the solver's infeasibility verdict is a proof rather than an
exhausted search, and it says what the proof is: the multipliers $r$ of the
entering normal in terms of the working-set normals, which the solver computed for
its own purposes, are the Farkas multipliers. The reversed case of
Remark~\ref{rem:reverse}, an equality violated from above, follows by
applying the proposition to the constraint $-n_*\T x \ge -b_*$, which the equality
implies.

Hypothesis (a) is $t_2 = \infty$ and (b) is $t_1 = \infty$, so the condition an
implementation tests, neither step limit being finite, is literally the hypothesis
of Proposition~\ref{prop:farkas}.

\begin{remark}[The remaining hypothesis is not free in floating point]
\label{rem:spurious}
Proposition~\ref{prop:farkas} also needs the entering constraint to be genuinely
violated. Exact arithmetic gives that: a constraint is selected only when
$v_{i_*} > 0$. Floating point does not, and the gap is reachable by the very
substitution Section~\ref{ssec:insert} argues for. A Householder reflection and
a Givens chain leave the iterate at slightly different points, so a constraint one
implementation leaves inside its boundary by a few multiples of the machine
epsilon, another can leave the same distance outside. Reaching the stuck state with
such a constraint selected, a solver must neither enforce it, there being nothing
to enforce, nor conclude infeasibility, the conclusion not following; the
implementation studied here sets it aside for the current iterate and takes the
next candidate, discarding that judgement as soon as the iterate moves. The test
separating rounding from a real violation is comparatively easy to set, because it
need only separate rounding from \emph{provable} infeasibility, and infeasibility
is macroscopic: its size is set by the geometry of the constraints rather than by
the arithmetic, so the two populations lie orders of magnitude apart.
\end{remark}

%% file: sections/s6_updates.tex
\section{Updating the factorisation}
\label{sec:updates}

Each iteration changes the working set by one column, and the factors must follow.
Recomputing them from scratch costs a QR factorisation of $J\T A$, $O(nk^2)$; the
updates below cost $O(n^2)$ and $O(nk)$ respectively. Over the $O(n)$ iterations
of a run that is the difference between $O(n^3)$ and $O(n^4)$, and the whole reason
the factors are carried and not rebuilt.

Both updates work the same way. The invariant~\eqref{eq:inv1} is preserved by
post-multiplying $J$ with any orthogonal $W$, since $(JW)\T G (JW) = W\T W = I$;
the update's task is to choose $W$ so that~\eqref{eq:inv2} is restored for the new
working set. Because $J' = JW$ gives ${J'}\T A = W\T(J\T A)$, the same $W\T$ acts on
the rows of $R$.

\subsection{Insertion}
\label{ssec:insert}

Let the working set of size $k$ be extended by the entering constraint, so
$A' = [\,A \ \ n_*\,]$. From~\eqref{eq:inv2} and $d = J\T n_*$,
\begin{equation}
  J\T A' = \begin{bmatrix} R & d_1 \\ 0 & d_2 \end{bmatrix},
  \label{eq:insert1}
\end{equation}
which fails to be triangular only in the $n-k$ entries of $d_2$ below its first.
Choose an orthogonal $W_2 \in \R^{(n-k)\times(n-k)}$ with
$W_2\T d_2 = \alpha e_1$ and set $W = \mathrm{diag}(I_k, W_2)$. Then
\begin{equation}
  (JW)\T A' = \begin{bmatrix} R & d_1 \\ 0 & W_2\T d_2\end{bmatrix}
  = \begin{bmatrix} R' \\ 0 \end{bmatrix},
  \qquad
  R' = \begin{bmatrix} R & d_1 \\ 0 & \alpha \end{bmatrix},
  \label{eq:insert2}
\end{equation}
upper triangular of order $k+1$, and the leading $k$ columns of $J$ are untouched
because $W$ acts as the identity on them.

\begin{proposition}[Full column rank is maintained]\label{prop:rank}
The insertion is performed only after a full step, which requires $t_2 < \infty$
and hence $z \neq 0$. Then $\alpha \neq 0$, so $R'$ is nonsingular and $A'$ has
full column rank $k+1$.
\end{proposition}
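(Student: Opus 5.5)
The plan is to chain three facts: a full step forces $z \neq 0$; $z \neq 0$ forces $\alpha \neq 0$; and a nonsingular $R'$ forces full column rank of $A'$. First, a full step is taken only when $t_2 \le t_1$ and $t = t_2$ is finite; the step that joins $i_*$ to the working set has $t = t_2$. By \eqref{eq:t2}, $t_2 = +\infty$ exactly when $z = 0$, so finiteness of $t_2$ gives $z \neq 0$. Lemma~\ref{lem:directions}(ii) then gives $\norm{d_2}^2 = n_*\T z > 0$, so $d_2 \neq 0$.

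Second, $W_2$ is orthogonal and $W_2\T d_2 = \alpha e_1$, so taking norms gives $|\alpha| = \norm{d_2} > 0$. The matrix $R'$ in \eqref{eq:insert2} is upper triangular with diagonal consisting of that of $R$ followed by $\alpha$. Hence $\det R' = \alpha \det R$, and this is nonzero provided $R$ is nonsingular on entry. That proviso is the inductive hypothesis. It holds vacuously at the cold start with $k = 0$. Deletions also preserve it, since removing a column from a full-column-rank $A$ leaves full column rank, and the downdate of Section~\ref{sec:updates} restores \eqref{eq:inv2} with a triangular factor that is again nonsingular by Proposition~\ref{prop:blocks}(i). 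So I would phrase the proof as an induction over the sequence of updates. Alternatively, I can rest it directly on the fact that $R\T R = A\T G^{-1} A$ is nonsingular whenever $A$ has full column rank.

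Third, I transfer nonsingularity of $R'$ to the rank of $A'$. By \eqref{eq:insert2}, $(JW)\T A' = [R';0]$, and $JW$ is nonsingular. Therefore $\operatorname{rank} A' = \operatorname{rank}[R';0] = k+1$. An equivalent route is $A'\T G^{-1}A' = R'\T R'$, which is positive definite.

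The only delicate point is the first step: one must make clear that the insertion really happens only with $t_2 < \infty$. The case where both limits are infinite exits through Proposition~\ref{prop:farkas} before any insertion occurs. With $z = 0$ and $t_1$ finite, the step is partial and deletes a constraint rather than inserting one. Once that logic is pinned down, the rest is a norm identity and a determinant of a triangular matrix.
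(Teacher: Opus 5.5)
Your proposal is correct and follows the paper's proof. You get $|\alpha| = \norm{d_2} \neq 0$ from Lemma~\ref{lem:directions}(ii), note that the triangular $R'$ has the diagonal of $R$ extended by $\alpha$, and conclude full column rank of $A'$ (the paper via ${R'}\T R' = {A'}\T G^{-1}A'$, you primarily via nonsingularity of $JW$, which is equivalent). You are more explicit than the paper about why insertion happens only with $t_2 < \infty$ and about the inductive nonsingularity of $R$ on entry; for the deletion step, $R\T R = A\T G^{-1}A$ follows from \eqref{eq:inv1}--\eqref{eq:inv2} alone, so cite that rather than Proposition~\ref{prop:blocks}(i), which already assumes $R$ nonsingular.
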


\begin{proof}
$|\alpha| = \norm{W_2\T d_2} = \norm{d_2}$, which is nonzero iff $z \neq 0$ by
Lemma~\ref{lem:directions}(ii). $R'$ is triangular with diagonal that of $R$
extended by $\alpha$, so it is nonsingular, and ${R'}\T R' = {A'}\T G^{-1}A'$ is then
positive definite, which forces $A'$ to have full column rank.
\end{proof}

The algorithm therefore never has to test for linear dependence among the
working-set normals. The step rules exclude it. A
constraint whose normal lies in the span of those already held has $z = 0$ by
Lemma~\ref{lem:directions}(iv), so its step is limited by $t_1$ and it can only
be added after enough partial steps have removed the dependence. This is the
structural advantage that Section~\ref{sec:pdas} gives up.

\paragraph{The choice of $W_2$.} Any orthogonal $W_2$ with
$W_2\T d_2 \in \R e_1$ will do, and by Proposition~\ref{prop:invariance} the
choice does not affect the iterates. The reference implementation uses a chain of
$n-k-1$ Givens rotations, one per trailing component. A single Householder
reflection achieves the same reduction:
\begin{equation}
  W_2 = I - \frac{2}{\beta}\, v v\T,
  \qquad v = d_2 - \alpha e_1,
  \qquad \beta = v\T v,
  \qquad \alpha = \pm\norm{d_2}.
  \label{eq:householder}
\end{equation}
That $W_2 d_2 = \alpha e_1$ is the standard Householder identity, using
$\beta = 2\,v\T d_2$. The reflection is symmetric, so it applies to the columns of
$J$ and to $d_2$ as one operation, and applying it to a block is a
matrix--vector product followed by a rank-one update, two operations of
$O(n(n-k))$, where the Givens chain is $n-k-1$ operations of $O(n)$. The
arithmetic is comparable; the number of operations is not.

\begin{remark}[The sign of $\alpha$, and cancellation]\label{rem:sign}
Numerical analysis prescribes $\alpha = -\mathrm{sign}((d_2)_1)\norm{d_2}$, which
makes $v_1 = (d_2)_1 - \alpha$ a sum of like-signed terms. The implementation
takes the opposite sign, $\alpha = \mathrm{sign}((d_2)_1)\norm{d_2}$, so as to
reproduce the sign convention of the reference's Givens chain, and thereby walks
into the cancellation the prescription exists to avoid, severely when $d_2$ is
already close to a positive multiple of $e_1$. The cure is algebraic rather than a
change of convention: with $\tau = \norm{(d_2)_{2:}}^2$ and
$\nu = \norm{d_2} = \sqrt{(d_2)_1^2 + \tau}$,
\begin{equation}
  v_1 = (d_2)_1 - \mathrm{sign}((d_2)_1)\,\nu
      = \frac{(d_2)_1^2 - \nu^2}{(d_2)_1 + \mathrm{sign}((d_2)_1)\nu}
      = \frac{-\mathrm{sign}((d_2)_1)\,\tau}{|(d_2)_1| + \nu},
  \label{eq:nocancel}
\end{equation}
in which every operation combines like-signed quantities. The remaining entries
of $v$ are those of $d_2$, and $\beta = \tau + v_1^2$. By
Proposition~\ref{prop:invariance} the sign convention is anyway immaterial to the
iterates; \eqref{eq:nocancel} is what makes it immaterial to the accuracy as
well. See~\cite{golub2013,higham2002} for the standard treatment.
\end{remark}

\begin{remark}[Why the reflections cannot be blocked]\label{rem:wy}
A sequence of Householder reflections is normally applied as a block. The WY
representation of Bischof and Van Loan~\cite{bischof1987}, and its storage-efficient
form due to Schreiber and Van Loan~\cite{schreiber1989}, write a product
$W_1 W_2 \cdots W_j$ as $I + WY\T$, so that $j$ reflections reach a matrix in one
level-3 update rather than $j$ level-2 ones. Blocked QR factorisation is fast for
that reason, and the first thing to reach for here.

It is not available, for a structural reason rather than an implementation one.
Blocking requires the $j$ reflections to be known before any is applied. In this
iteration the reflection that inserts a constraint is determined by
$d = J\T n_*$, which depends on $J$ \emph{after} the previous insertion; and
between any two insertions the solver reads $z$ and $r$ off the updated factors to
decide which constraint enters next, and whether it enters at all. The updates are
interleaved with reads of what they produce, so there is no window in which two
reflections are simultaneously known and unapplied.

This is the same obstruction as in the deletion chase below, and the reason the
method stays a level-2 computation no matter how it is coded. A variant
that inserted several constraints per iteration would open the window; the dual
method's step rules, which admit one constraint at a time by construction, close
it.
\end{remark}

\subsection{Deletion}
\label{ssec:delete}

Let the constraint in position $p$ of the working set be removed. Deleting column
$p$ of $A$ deletes column $p$ of $R$ and shifts columns $p+1,\dots,k$ one place
left, each retaining its own entries. The result is upper triangular except for
one subdiagonal: writing $\tilde R$ for the shifted $k \times (k-1)$ matrix,
$\tilde R_{ij} = 0$ for $i > j+1$, with the offending entries
$\tilde R_{j+1,j}$ for $j = p,\dots,k-1$.

These entries lie in distinct columns, and no single reflection annihilates
components of distinct vectors. What restores the shape is a \emph{chase} of
$k-p$ plane transformations: for $j = p,\dots,k-1$ in order, a $2\times2$
orthogonal acting on rows $j$ and $j+1$ annihilates $\tilde R_{j+1,j}$, and the
same transformation is applied to columns $j$ and $j+1$ of $J$. The chase is
inherently sequential, the parameters of each transformation being read from entries
the previous one wrote, which is why no batched form of it exists.

The implementation takes the $2 \times 2$ block to be the symmetric reflection
\begin{equation}
  W_j^{(2)} = \begin{bmatrix} c & s \\ s & -c \end{bmatrix},
  \qquad c = \frac{x}{h},\quad s = \frac{y}{h},
  \qquad h = \mathrm{sign}(x)\sqrt{x^2+y^2},
  \label{eq:refl2}
\end{equation}
where $x = \tilde R_{jj}$ and $y = \tilde R_{j+1,j}$, so that
$W_j^{(2)}(x,y)\T = (h,0)\T$. Symmetry is the point. The update rule of this
section applies $W$ to the columns of $J$ and $W\T$ to the rows of $R$; a
symmetric block makes those the same matrix, so one $2\times2$ serves both sides.
A Givens rotation, which is what BLAS offers, is not symmetric: it agrees
with~\eqref{eq:refl2} on the first output and negates the second, so it requires
its transpose on the $R$ side. By Proposition~\ref{prop:invariance} either choice
gives the same iterates, the difference being a sign matrix.

\begin{remark}[The degenerate step]
When $x = 0$ the formula~\eqref{eq:refl2} gives $c = 0$, $s = \pm1$, and the
transformation reduces to an exchange of the two rows up to sign. The
implementation performs a plain exchange, $\left[\begin{smallmatrix} 0 & 1 \\ 1 &
0\end{smallmatrix}\right]$, which is orthogonal and symmetric and annihilates $y$
just as well; it differs from~\eqref{eq:refl2} by a sign when $y < 0$, and
Proposition~\ref{prop:invariance} says that is free.
\end{remark}

\begin{remark}[Where the storage cost lands]
$R$ is held as packed columns, entry $(i,j)$ with $i \le j$ at offset
$j(j+1)/2 + i$, so its leading $k \times k$ block, which the solve for $r$ reads
once per iteration, is a contiguous run at every $k$. Insertion writes one column,
contiguous in that layout; deletion mixes two \emph{rows} across a range of
columns, which is a strided gather because column offsets grow with the index. The
asymmetry is deliberate, and the right way round. It is a property of the
layout rather than of the derivation, to which the mathematics is indifferent.
\end{remark}

%% file: sections/s8_pdas.tex
\section{Guessing the active set instead of walking to it}
\label{sec:pdas}

The dual method reaches the active set one constraint at a time. That is its
defining feature and, at moderate $n$, its principal cost: a budget-plus-bounds
problem in $n = \qpPdasRefN{}$ variables takes some \qpPdasBudgetOuter{} outer
iterations (Section~\ref{ssec:pdas}), each of which is a handful of
matrix--vector products. The alternative is to guess the whole active
set at once, solve the working-set subproblem it induces, and repair the guess
from the signs that come back. This is the semismooth Newton method of
Hinterm\"uller, Ito and Kunisch~\cite{hintermuller2002} read as a primal--dual
active-set strategy, and block principal pivoting on the associated linear
complementarity problem~\cite{judice1994,murty1988,cottle1992} read as a rule for
exchanging several indices at a time.

\subsection{The iteration}

Given a candidate set $\Active$, Proposition~\ref{prop:sub} gives the working-set
solution directly:
\begin{equation}
  \bigl(C_\Active\T G^{-1} C_\Active\bigr)\lambda_\Active
   = b_\Active - C_\Active\T x_u,
  \qquad
  x = x_u + G^{-1}C_\Active \lambda_\Active,
  \qquad x_u = G^{-1}a,
  \label{eq:pdassolve}
\end{equation}
with $\lambda_i = 0$ for $i \notin \Active$. Both solves go through one Cholesky
factorisation of $G$, computed once, and one of the $k \times k$ dual Hessian,
computed per repair. The repair rule reads the two sign conditions of
\eqref{eq:dfeas}--\eqref{eq:pfeas} as instructions:
\begin{equation}
  \Active' = \{1,\dots,m_{\mathrm{eq}}\} \;\cup\;
  \bigl\{\, i > m_{\mathrm{eq}} \;:\;
    (i \in \Active \text{ and } \lambda_i \ge 0)
    \ \text{ or }\
    (c_i\T x < b_i) \,\bigr\},
  \label{eq:repair}
\end{equation}
that is: an active constraint whose multiplier has gone negative does not belong
in the set and leaves it; an inactive constraint that is violated at $x$ does
belong and joins it; equalities are always in. The starting guess is the
equalities together with whatever the unconstrained minimiser violates, which is
already the answer when it violates nothing. If $\Active' = \Active$ the
conditions~\eqref{eq:pfeas}--\eqref{eq:comp} hold to the tolerance used, and
\eqref{eq:pdassolve} supplies \eqref{eq:stat} by construction.

The whole set is exchanged at once, and it converges in a handful of
repairs almost independently of $n$, from \qpPdasRepairsLo{} to
\qpPdasRepairsHi{} across every family and size measured in
Section~\ref{ssec:pdas}. It trades arithmetic, which is abundant at these sizes,
for iterations, which are not.

\subsection{Why there is no finite-termination theorem here}

Block exchange can cycle, and the standard remedy is a least-index rule that flips
only the lowest-indexed offending constraint, in the manner of Bland's rule for
the simplex method and Murty's least-index rule for linear complementarity
problems~\cite{murty1988}. What follows concerns this \emph{block} exchange
specifically, and not active-set methods in general: the exact walk of
Section~\ref{sec:iteration} is finite under nondegeneracy
(Proposition~\ref{prop:ascent}), and for parametric families of QPs its worst-case
iteration count can be certified exactly~\cite{arnstrom2022certification}. In the bound-constrained case that remedy comes with a
theorem. The theorem does not survive the generalisation, and the reason is
structural, not a gap in the argument.

Consider $\min_{x \ge 0} \tfrac12 x\T G x - a\T x$, so that $C = I$ and $m = n$.
The system solved on a candidate set $\Active$ is then
\begin{equation}
  \bigl(I_{\cdot\Active}\T G^{-1} I_{\cdot\Active}\bigr)\lambda_\Active
  = \bigl(G^{-1}\bigr)_{\Active\Active}\lambda_\Active
  = b_\Active - x_{u,\Active},
  \label{eq:principal}
\end{equation}
whose coefficient matrix is a \emph{principal submatrix} of $G^{-1}$. Since
$G^{-1} \succ 0$, every principal submatrix of it is positive definite, so
$G^{-1}$ is a $P$-matrix: every principal pivot is well defined, whatever the
guess. That is the hypothesis under which block principal pivoting with a
least-index guard terminates finitely at the unique
solution~\cite{judice1994,murty1988}, and the guarantee survives inexact inner
solves once their residual is small against the problem's decision
margin~\cite{schmelzer2026nncg}.

None of it is available for general $C$. The matrix
$C_\Active\T G^{-1} C_\Active$ is positive definite exactly when $C_\Active$ has
full column rank, and that is a property of the guess, not of the data: a guessed
set may name $k > n$ constraints, or $k \le n$ linearly dependent ones, and then
the pivot is undefined. There is no $P$-matrix to
appeal to, and correspondingly no theorem to inherit. Contrast
Proposition~\ref{prop:rank}: the dual method's step rules make dependence
unreachable, and guessing forfeits that protection.

That the plain exchange fails on general constraints is not news, and the repair
the literature offers says where the difficulty sits. Curtis, Han and
Robinson~\cite{curtis2015pdas} recover global convergence by carrying an index set
auxiliary to the active-set estimate, which holds the indices that would otherwise
cycle. That changes what the method carries, not the order in which it exchanges,
and it has to: no rule for choosing the next set can recover the bound-constrained
theorem when the subproblem on the chosen set may be singular. What is claimed here
is the mechanism and its price. The rank condition is the one already governing
Proposition~\ref{prop:sub}, it is a property of the guess and not of the data, and
Section~\ref{ssec:pdas} measures both how invisible it is on random instances and
how quickly a repeated column makes it certain.

The implementation treats the Cholesky factorisation of
$C_\Active\T G^{-1}C_\Active$ as the rank test, where a dependent guess fails
instead of returning something plausible, and the least-index rule as a way of
making progress where block exchange oscillates, and not as a termination proof.
Neither substitutes for the guarantee. The certificate does.

\subsection{The certificate}

Because the method may fail, and may fail by stabilising on a set that is simply not
optimal, the trade is sound only because the answer is checked. Here
Proposition~\ref{prop:sufficient} carries the argument: a candidate
$(x,\lambda)$ satisfying
\begin{equation}
  \norm{G x - a - C\lambda}_\infty \le \varepsilon,
  \quad |c_i\T x - b_i| \le \varepsilon \ (i \le m_{\mathrm{eq}}),
  \quad c_i\T x - b_i \ge -\varepsilon, \ \ \lambda_i \ge -\varepsilon,
  \ \ |\lambda_i (c_i\T x - b_i)| \le \varepsilon
  \label{eq:certificate}
\end{equation}
on the inequalities, with $\varepsilon$ scaled by the data, \emph{is} the answer to
that tolerance.
Stationarity is checked against $G$ directly and not trusted from the
construction, since the construction is what an ill-conditioned working set
corrupts. A candidate that fails is discarded and the exact walk of
Algorithm~\ref{alg:gi} runs instead, so guessing can never degrade the answer: it
produces the minimiser or nothing.

\begin{remark}[Two tolerances with very different jobs]
The tolerance in~\eqref{eq:repair} decides which set is tried next. A bad choice
there costs a repair or a fallback and can never cost correctness, so it is not
delicate. The tolerance in~\eqref{eq:certificate} is the only thing standing
between a non-optimal point and the caller, and is therefore the one to reason
about. It is nonetheless not delicate either, because the quantity it thresholds
is bimodal: a candidate constructed from a well-conditioned working set satisfies
\eqref{eq:certificate} to a few multiples of the machine epsilon, and one built on
a set that is not has no reason to come close. Section~\ref{ssec:pdas} measures
the accepted population, whose worst residual is $\qpPdasAcceptWorst$, and reports
what it did and did not observe on the rejecting side.
\end{remark}

%% file: sections/s9_sweep.tex
\section{A family of programs differing only in the linear term}
\label{sec:sweep}

An efficient frontier, a rolling rebalance and a scenario grid all solve the same
program repeatedly with a slightly different linear term: $G$, $C$, $b$ and
$m_{\mathrm{eq}}$ are fixed and only $a$ varies. Solved independently, each
instance rediscovers an active set it almost always already had.

What makes this exploitable is a reading of the invariants
\eqref{eq:inv1}--\eqref{eq:inv2}: \emph{neither factor depends on $a$}. $J$
depends on $G$ alone through $J\T G J = I$ and on the working set through the
orthogonal transformations accumulated into it; $R$ depends on $G$ and the working
set. The linear term enters only through $x_u = G^{-1}a$ and the right-hand sides.
So across such a family both factors are reusable verbatim, and the entire
solution can be recovered from them.

\subsection{Recovery}

\begin{proposition}[Recovery from cached factors]\label{prop:recover}
Let $(J,R)$ satisfy \eqref{eq:inv1}--\eqref{eq:inv2} for a working set $\Active$
of size $k$ with normals $A$. For a new linear term $a$, set
\begin{equation}
  x_u = J\,(J\T a),
  \qquad
  \rho = b_\Active - A\T x_u,
  \qquad
  R\T y = \rho,
  \qquad
  x = x_u + J_1 y,
  \qquad
  R\,u = y .
  \label{eq:recover}
\end{equation}
Then $(x,u)$ is the solution and multiplier pair of the working-set subproblem
\eqref{eq:sub}.
\end{proposition}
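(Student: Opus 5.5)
The plan is to show that the quantities in~\eqref{eq:recover} coincide with the closed forms~\eqref{eq:subsol} of Proposition~\ref{prop:sub}. That proposition already identifies its output as the unique solution and multiplier pair of~\eqref{eq:sub}, so matching the two is all that is required. Throughout I take $R$ nonsingular, equivalently $A$ of full column rank, which is the standing hypothesis of Propositions~\ref{prop:sub} and~\ref{prop:blocks}.

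The argument has three steps.

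\textbf{Step 1: the unconstrained minimiser.} Invariant~\eqref{eq:inv1} gives $JJ\T = G^{-1}$, so $x_u = J(J\T a) = G^{-1}a$. This is the $x_u$ of~\eqref{eq:subsol}, and $\rho = b_\Active - A\T x_u$ is exactly the right-hand side appearing there.

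\textbf{Step 2: the multiplier.} Combining the two triangular solves gives
\[
  u = R^{-1}R^{-\top}\rho = (R\T R)^{-1}\rho .
\]
Proposition~\ref{prop:blocks}(i) gives $R\T R = A\T G^{-1}A$. Substituting, $u = (A\T G^{-1}A)^{-1}(b_\Active - A\T x_u)$, which is the multiplier in~\eqref{eq:subsol}.

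\textbf{Step 3: the primal point.} Proposition~\ref{prop:blocks}(iii) gives $J_1 = G^{-1}AR^{-1}$. Since $y = Ru$, this yields
\[
  J_1 y = G^{-1}AR^{-1}Ru = G^{-1}Au,
\]
and hence $x = x_u + G^{-1}Au$, as in~\eqref{eq:subsol}. The order of the operations matters here: $y$ is formed before $u$, and $x$ uses only $y$. The identity above shows this ordering is consistent.

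Every step is a direct substitution, so I expect no real obstacle. The only point needing care is Step 3. One has to track that $x$ is built from the intermediate $y$ rather than from $u$, and the bridge between them is $J_1 = G^{-1}AR^{-1}$ from Proposition~\ref{prop:blocks}(iii). That formula itself rests on $J^{-\top} = GJ$, which holds because of~\eqref{eq:inv1}.

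A short remark will also note that no step uses $a$ except through $x_u$. This confirms the reading that the factors are independent of $a$ and carry over verbatim across the family of programs.
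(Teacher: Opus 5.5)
Your proposal is correct and follows the paper's proof essentially step for step: $x_u = G^{-1}a$ from \eqref{eq:inv1}, $u = (R\T R)^{-1}\rho = (A\T G^{-1}A)^{-1}\rho$ from Proposition~\ref{prop:blocks}(i), and $J_1 y = J_1 R u = G^{-1}Au$ from Proposition~\ref{prop:blocks}(iii). You also state explicitly that $R$ is nonsingular, which the paper leaves implicit in its appeal to Proposition~\ref{prop:blocks}; that is a small but welcome addition.
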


\begin{proof}
$x_u = JJ\T a = G^{-1}a$ by~\eqref{eq:inv1}. By Proposition~\ref{prop:blocks}(i),
$u = R^{-1}y = R^{-1}R^{-\top}\rho = (R\T R)^{-1}\rho = (A\T G^{-1}A)^{-1}\rho$,
which is the multiplier of~\eqref{eq:subsol}. For the iterate, $Ru = y$ gives
$J_1 y = J_1 R u = G^{-1}A u$ by Proposition~\ref{prop:blocks}(iii), so
$x = x_u + G^{-1}Au$, again as in~\eqref{eq:subsol}.
\end{proof}

The cost is $O(n^2)$, all of it in $x_u = J(J\T a)$: two matrix--vector products with
a dense $n \times n$ matrix, $J$ having lost its triangularity at the first update.
Everything after that is $O(nk + k^2)$. Re-deriving the factors for the
same working set instead costs $k$ insertions at $O(n^2)$ each, so the saving is a
factor of order $k$, largest exactly where it matters: a long-only optimum is a
vertex at which most bounds bind.

The exponent is easy to get wrong, because the natural experiment cannot see it.
On a box family the active set grows with $n$, so $O(nk)$ and $O(n^2)$ predict the
same curve and either reading fits; holding $k$ fixed at $\qpSweepFixedK{}$ while
$n$ grows separates them, and the measured local exponent
$d\log t/d\log n$ then runs from $\qpSweepSlopeLo$ at $n = \qpSweepNlo{}$ to
$\qpSweepSlopeHi$ at $n = \qpSweepNhi{}$. Neither is $1$. The flat small-$n$ end is
an overhead floor, a hit there costing what its dozen array operations cost to
dispatch, and it ends where the $n^2$ overtakes dispatch.

Whether $(x,u)$ from~\eqref{eq:recover} is the answer to the \emph{full} program
is then one certificate away, the same certificate as before: the
multipliers of the working-set inequalities must be non-negative and no constraint
outside the working set may be violated. By
Proposition~\ref{prop:sufficient} that is a proof. Note the asymmetry in cost: the
recovery is $O(n^2)$ but the check must look at every constraint, so it is
$O(nm)$ for a dense $C$, so verification and not recovery is what dominates a
successful reuse on a dense problem.

\subsection{Repair rather than restart}

When the check fails, the cached working set is stale, but it remains a far better
starting point than the unconstrained minimum. What blocks resuming from it is
\eqref{eq:invariant}: the iteration requires dual feasibility, and a stale set
typically has one or more negative multipliers. Those are precisely the
constraints that no longer belong.

\begin{proposition}[Repair terminates in a resumable state]\label{prop:repair}
Iterate the following from the cached set: recover $(x,u)$ by
\eqref{eq:recover}; if every inequality multiplier is non-negative, stop;
otherwise drop one constraint with a negative multiplier, downdate $(J,R)$ by
Section~\ref{ssec:delete}, and repeat. The loop stops after at most $k$ passes, in
a state satisfying~\eqref{eq:invariant}.
\end{proposition}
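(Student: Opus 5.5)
The argument is a counting bound plus the observation that every pass leaves $(J,R)$ valid for the current set. Throughout the loop I maintain the claim that $(J,R)$ satisfies \eqref{eq:inv1}--\eqref{eq:inv2} for the current working set $\Active$ with $R$ nonsingular. Given that, the recovered pair $(x,u)$ solves the working-set subproblem \eqref{eq:sub} by Proposition~\ref{prop:recover}. At the start the claim holds by hypothesis, the cached factors being exact for the cached set.

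\textbf{Invariant under deletion.} The deletion of Section~\ref{ssec:delete} post-multiplies $J$ by an orthogonal $W$, so \eqref{eq:inv1} is preserved. The chase restores the triangular shape of $W\T(J\T A')$, where $A'$ is $A$ with column $p$ removed, so \eqref{eq:inv2} holds for $\Active\setminus\{i\}$ with a new $(k-1)\times(k-1)$ triangular $R'$. Nonsingularity of $R'$ needs a short argument, since the chase can produce zero diagonal entries in principle. I would avoid reasoning about the chase entries directly. A subset of linearly independent columns is independent, so $A'$ has full column rank. Then ${R'}\T R' = {A'}\T G^{-1}A'$, by the computation in Proposition~\ref{prop:blocks}(i), which uses only the invariants. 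That Gram matrix is positive definite, so $R'$ is nonsingular. This closes the induction.

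\textbf{Termination count.} Each pass that does not stop removes exactly one index from $\Active$, so after at most $k$ removals the loop stops. Stopping is forced at the latest when $\Active$ is empty or contains only equalities, because then there is no inequality multiplier to be negative. This gives the bound of $k$ passes, counting the drop-and-recover passes; one could equally say $k+1$ recoveries. The equalities are never dropped, because only inequalities with negative multipliers are candidates. The bound also does not depend on which negative multiplier is chosen.

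\textbf{The final state.} The loop stops only when every inequality multiplier is non-negative. Together with $(x,u)$ solving \eqref{eq:sub}, that is literally \eqref{eq:invariant}, so Algorithm~\ref{alg:gi} can resume from it. If the loop ends at the empty set, $x = x_u$ and we recover the cold start of Remark~\ref{rem:cold}, which is consistent with this.

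The main obstacle is the induction step on $R'$ being nonsingular and satisfying \eqref{eq:inv2} exactly. The rest is counting. I would lean on Proposition~\ref{prop:blocks}(i) and column independence rather than on the entries of the chase.
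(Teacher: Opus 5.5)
Your proposal is correct and follows the paper's proof. Both use the same count of drops, invoke Proposition~\ref{prop:recover} to show the exit state solves the subproblem, and rely on full column rank surviving column deletion. Your Gram-matrix argument that the downdated $R'$ is nonsingular, ${R'}\T R' = {A'}\T G^{-1}A' \succ 0$, makes explicit what the paper compresses into ``deleting columns preserving it.''
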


\begin{proof}
Each pass that does not stop shrinks the working set by one, so there are at most
$k$; with an empty set the sign condition is vacuous, which is the cold start of
Remark~\ref{rem:cold}. On exit $(x,u)$ solves the subproblem by
Proposition~\ref{prop:recover} and the sign conditions hold,
which is~\eqref{eq:invariant}; full column rank is inherited, deleting columns
preserving it.
\end{proof}

From such a state the iteration of Section~\ref{sec:iteration} cannot tell a
resumed solve from a cold one, the invariant being all it reads, so the resumed
walk needs only to drive the remaining primal infeasibility to zero. In the worst
case everything is dropped and the resumed walk is a cold solve, so the repair is
never worse than restarting up to the cost of the drops themselves.

\begin{remark}[Why this cannot change the answer]
Neither the reuse nor the repair introduces an approximation. The reuse returns a
point only when it carries a certificate, and the repair merely chooses a
different starting state for an iteration whose output is determined by
Proposition~\ref{prop:sufficient}. What does change is the reported iteration
count, which counts the working-set edits actually performed and is therefore not
comparable with a cold solve's.
\end{remark}

%% file: sections/s11_experiments.tex
\section{Numerical experiments}
\label{sec:experiments}

Everything above is a proof, and proofs leave two things open. The first is whether
the code satisfies the identities: they hold in exact arithmetic, but the
implementation reaches them through a triangular factor held as packed columns
addressed by hand and through library calls writing in place into a view of $J$'s
own buffer, correct only while that view stays contiguous. A sign error in the
packed indexing would leave every proposition above true and the running solver
wrong, and wrong quietly: the invariants are maintained across a run, not
recomputed, and a solver drifting from $J\T G J = I$ still returns a vector that is
very nearly right. The second is the set of quantities the note reasons about but
cannot predict: how often a guessed active set is certifiable, how many repairs it
takes, what a warm-started solve costs. All measurements are on the implementation
at the version recorded in Section~\ref{sec:reproducibility}, by scripts
distributed with this note's sources.

\subsection{The guessed active set, and what the certificate catches}
\label{ssec:pdas}

Section~\ref{sec:pdas} makes two claims only measurement settles: that the block
exchange converges in a number of repairs independent of $n$, and that losing the
$P$-matrix property for general $C$ is a real exposure.
Figure~\ref{fig:pdas} addresses both over \qpPdasFamilies{} constraint families,
\qpPdasInstances{} instances each, at $n \in \{\qpPdasSizes\}$, instrumenting the
shipped routines instead of reimplementing them.

\begin{figure}[ht]
\centering
\includegraphics[width=0.86\textwidth]{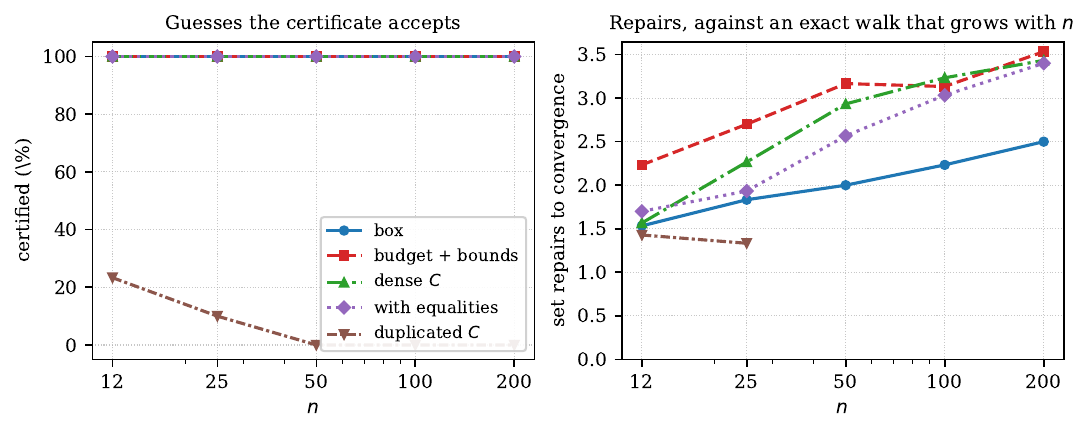}
\caption{Left: the fraction of guesses the certificate accepts. Right: set repairs
to convergence. Four families are certified without exception at every size and
settle in \qpPdasRepairsLo{} to \qpPdasRepairsHi{} repairs while the exact walk's
outer iterations grow linearly in $n$. The fifth is constructed so that a guessed
set can be linearly dependent, and it fails.}
\label{fig:pdas}
\end{figure}

On the four well-posed families (bounds, budget-plus-bounds, dense $C$, and $C$
with equalities) every guess is certified at every size, and the repair
count rises only from \qpPdasRepairsLo{} to \qpPdasRepairsHi{} across a
sixteen-fold range of $n$, never exceeding \qpPdasRepairsMax{}. The comparison
that matters is against the exact walk on the same instances: on
budget-plus-bounds it takes \qpPdasBudgetOuter{} outer iterations at
$n = \qpPdasRefN{}$, and its count grows linearly where the repair count is very
nearly flat. Section~\ref{sec:pdas} describes that trade; here it is measured.

\paragraph{The rank-deficient guess is real, and invisible by accident.} Over
\qpPdasInstances{} instances at each of five sizes, not one guess on those four
families produced a rank-deficient working-set system, from which a reader might
conclude the exposure is theoretical. It is not, merely invisible in random data,
where a dense $C$ almost never has a dependent subset the guess happens to name.
The fifth family makes it reachable in the most elementary way, by repeating
columns of $C$, so that naming both copies induces a singular system: there the
rank test fires on \qpPdasSingularDup\% of attempts and the certified fraction
collapses from \qpPdasCertSmall\% at $n = \qpPdasNsmall{}$ to \qpPdasCertBig\% at
$n = \qpPdasNbig{}$, against \qpPdasSingularBox\% on the others. Duplicated
constraints are not exotic, arising whenever a model is assembled programmatically
from overlapping rules, and they give the structural point of
Section~\ref{sec:pdas} its concrete form: for bounds the system is a principal
submatrix of $G^{-1}$ and cannot be singular, and nothing about general $C$
preserves that.

\paragraph{Where the failures are caught.} Of the candidates reaching the
certificate, \qpPdasAccepted{} were accepted with a worst KKT
residual~\eqref{eq:certificate} of $\qpPdasAcceptWorst$, and \qpPdasRejected{}
were rejected. We report that asymmetry instead of smoothing it: on these
families the certificate never had to reject a converged candidate, because the
failures were caught earlier and more cheaply by the Cholesky factorisation
refusing a dependent guess. The certificate remains the guarantee, but on this
evidence the rank test does the work in practice.

\subsection{Against other solvers, and against the same method done differently}
\label{ssec:compare}

The experiments so far compare the solver with its own claims, which establishes
that it is implemented correctly and says nothing about whether the method is worth
using. Table~\ref{tab:compare} and Figure~\ref{fig:compare} answer that against
four alternatives. Two are outside the family: an operator-splitting method (OSQP)
and an interior-point method (Clarabel). Two are inside it, and are the
informative ones: the reference C implementation of this exact algorithm, and
DAQP~\cite{arnstrom2022daqp}, which walks the same dual active set but carries it
on recursive $LDL^\top$ updates of the working-set system rather than on the
$(J,R)$ pair of Section~\ref{sec:factorisation}.

Times alone would mislead, in the direction that flatters whichever solver was
configured loosest: an active-set method returns an exact KKT point, a splitting
method stops at a tolerance, and an interior-point method approaches the boundary
without reaching it. Every solver is therefore asked for $\qpCmpTol$ and the
residual it \emph{achieves} is reported beside its time, in the scaled sup-norm
of~\eqref{eq:certificate}. Multipliers are recovered from the returned iterate by
least squares and not read from the solver, so none is charged for reporting
them in another convention.

\begin{table}[ht]
\centering
\small
\caption{Time per solve in milliseconds and achieved KKT residual, at
$n \in \{\qpCmpSizes\}$. The first four rows are active-set methods and the last
two are not, which the residual column is there to show.}
\label{tab:compare}
\begin{tabular}{lrrrrrrrr}
\toprule
& \multicolumn{2}{c}{$n = 50$} & \multicolumn{2}{c}{$n = 100$}
& \multicolumn{2}{c}{$n = 200$} & \multicolumn{2}{c}{$n = 400$} \\
\cmidrule(lr){2-3}\cmidrule(lr){4-5}\cmidrule(lr){6-7}\cmidrule(lr){8-9}
Solver & ms & resid. & ms & resid. & ms & resid. & ms & resid. \\
\midrule
\quadprogCompareRows
\bottomrule
\end{tabular}
\end{table}

\begin{figure}[ht]
\centering
\includegraphics[width=0.86\textwidth]{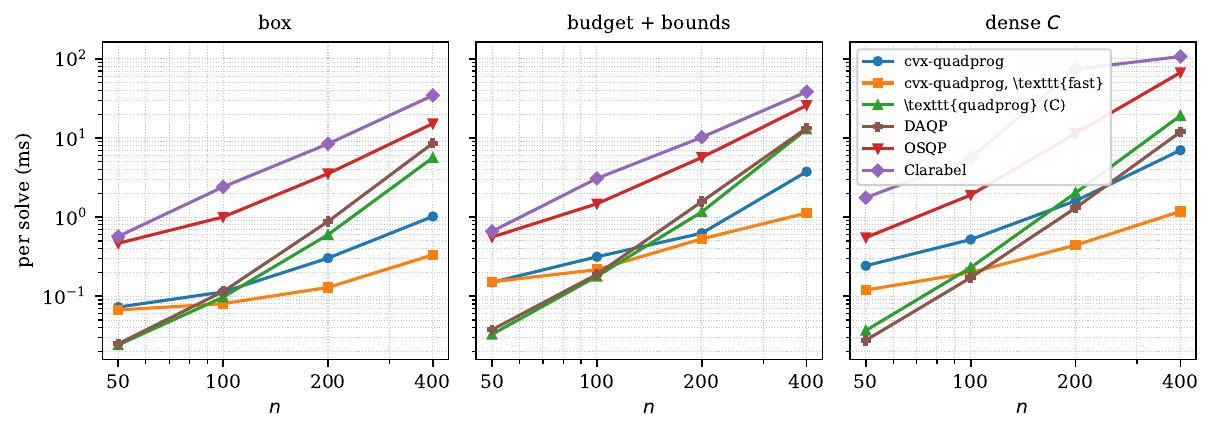}
\caption{Time per solve against $n$ on three constraint shapes. The reference
implementation of the same algorithm wins at the small end, where cost is
interpreter dispatch and not arithmetic, and loses at the large end, where it
is arithmetic.}
\label{fig:compare}
\end{figure}

\paragraph{The same algorithm in two languages crosses over.} The C reference is
the fastest solver here at $n = 50$ and among the slowest at
$n = \qpCmpNhi{}$, where it is $\qpCmpVsRef\times$ slower than the
implementation measured here. Nothing algorithmic separates them: they
walk the same active sets in the same order. What separates them is that below a
few hundred variables a solve costs what its operations cost to dispatch, and
above it a solve costs arithmetic, which reaches vendor-tuned kernels in one and
hand-written scalar loops in the other. This reproduces, independently and as a
by-product, the crossover the implementation is designed around.

\paragraph{Accuracy is where the interior-point method pays.} Clarabel is asked
for $\qpCmpTol$ and delivers between $10^{-9}$ and
$\qpCmpResidClarabel$, against $\qpCmpResidExact$ for the active-set method,
several orders of magnitude, and not a defect of the solver. The optima here are
vertices of the feasible polyhedron, with many constraints active, and a vertex is
exactly the case an interior-point method approaches asymptotically. OSQP reaches
$\qpCmpResidOsqp$, comparable to the active-set method, but only with its
polishing step enabled and its tolerance set to $\qpCmpTol$; at its default
tolerance it is faster and not comparable. The general point is that on
combinatorially structured optima an active-set method returns an answer the
others can only approach, and comparing times without residuals hides that.

\paragraph{A different factorisation of the same walk crosses over too.} Both
in-family solvers behave the same way. Both are fastest of everything at $n = 50$,
the C reference on bounds and on budget-plus-bounds and DAQP on dense $C$, and both
are behind at
$n = \qpCmpNhi{}$, where DAQP is $\qpCmpVsDaqp\times$ slower on bounds and
$\qpCmpVsDaqpDense\times$ on dense $C$. Accuracy is indistinguishable from ours
throughout ($\qpCmpResidDaqp$ at worst), as one expects of methods in the same
family: all three terminate on a vertex rather than approaching one, which is
exactly what separates them from the two rows below. The comparison is not
apples to apples in one respect: DAQP is written in library-free C
for embedded targets and pays no interpreter cost, so at small $n$ it is measuring
a different thing from us. What it shares is the shape of the crossover, and what
it shows is that the shape is a property of the regime and not of any one
implementation.

\paragraph{Guessing the active set is worth about what the language is.} The
\texttt{fast} path is the quickest of everything measured at
$n = \qpCmpNhi{}$ on every family, and it beats the exact walk by
$\qpCmpFastLo\times$ to $\qpCmpFastHi\times$. That is the same order as the gap
between the two implementations of the exact walk at the same size,
$\qpCmpVsRef\times$ on bounds and $\qpCmpVsRefDense\times$ on dense $C$, and the
two orderings run opposite: guessing gains most on dense $C$, where the language
gains least, and least on bounds, where it gains most. Iterations, not the cost of
an iteration, dominate at these sizes; Section~\ref{sec:pdas}'s argument arrives
here from a different direction.

\paragraph{What this comparison is not.} These are dense problems of small to
moderate size whose optima are vertices, which is the territory
Section~\ref{sec:intro} claims for the method and precisely the territory OSQP and
Clarabel are not built for. Both are designed for large sparse problems, both
exploit sparsity this experiment does not give them, and both would be expected to
win outright at $n$ in the tens of thousands with a sparse $G$. The table is
evidence that the method is the right choice on its own ground, and no evidence at
all about anyone else's.

\paragraph{So which of the four should a reader use?} The question is fair, and the
table answers it only if one reads the columns against each other, so we state it.
For dense problems whose optima are vertices, any of the four in-family rows returns
the same answer to machine precision, and the choice is about constraints other than
accuracy. Below a few hundred variables the compiled implementations win, and by a
margin no amount of array-language tuning will close, because what is being measured
there is interpreter dispatch. Above that the ordering reverses, and the fast path of
Section~\ref{sec:pdas} leads on every family measured. The two solvers outside the
family are the right choice once $n$ is large and $G$ is sparse, which this
experiment does not test and does not claim to.

DAQP deserves a sharper statement than the ordering gives it, because on three
counts it is simply ahead: it handles semidefinite $G$ through proximal
regularisation, its worst-case iteration count can be certified offline for a
parametric family~\cite{arnstrom2022certification}, and it is library-free C sized
for embedded targets. A reader with a toolchain, a real-time deadline and a
semidefinite Hessian should use it. What the implementation studied here offers
against that is narrower: no compiler and no build step, a
drop-in signature for code already written against the reference, and the arithmetic
advantage above a few hundred variables. Whether that is the better trade is a
property of the reader's situation and not of the method.

\subsection{What these experiments do not establish}
\label{ssec:limits}

These experiments verify agreement between the code and the identities proved
here, not the absence of defects elsewhere, in constraint selection or the
infeasibility verdict or degenerate input, for which a differential comparison
of complete trajectories against an independent implementation would be the
instrument. They say nothing about conditioning: problems use
$G = BB\T + nI$, and the residuals would grow with $\kappa(G)$. That choice also
biases Section~\ref{ssec:pdas}, in the direction that works against the conclusion
drawn there, since a well-conditioned $G$ spreads the minimiser so few bounds bind
where a realistic covariance concentrates it and binds many;
Section~\ref{sec:portfolio} measures how far off that is. And the checks are dense
and of modest size, with single-machine timings.

%% file: sections/s12_portfolio.tex
\section{A problem the synthetic families were not standing in for}
\label{sec:portfolio}

Every experiment so far draws its Hessian as $G = BB\T + nI$ for Gaussian $B$.
Section~\ref{ssec:limits} names that as a limitation and says the bias runs against
the method this note advocates. This section makes the claim concrete rather than
leaving it as a caveat, by solving the problem the method was built for, the
long-only portfolio, on two real return series.

The program is~\eqref{eq:qp2} with the budget as the single equality and
non-negativity as the bounds,
\begin{equation}
  \min_{w}\; \tfrac12 w\T \Sigma w - \rho\,\mu\T w
  \quad\text{subject to}\quad
  \mathbf{1}\T w = 1, \quad w \ge 0,
  \label{eq:portfolio}
\end{equation}
with $\Sigma$ the annualised sample covariance of daily returns and $\mu$ the
sample mean. Two universes, so that nothing below is a property of one market:
the S\&P~500 at $n = \qpPfSpN{}$ assets over $T = \qpPfSpT{}$ days, and the
FTSE~100 at $n = \qpPfFtseN{}$ over $T = \qpPfFtseT{}$.

The FTSE series carries one instrument quoted flat across the whole window. Its
sample variance is zero, so $\Sigma$ is singular, rank $86$ of $87$, and the
solver refuses the problem, correctly: an asset with no variance has no place in a
variance-minimising portfolio. The degenerate column is dropped and
$n = \qpPfFtseN{}$ is the result.

\subsection{Real covariances are the hard case, and in the expected direction}

Table~\ref{tab:portfolio} reports the minimum-variance corner, $\rho = 0$.

\begin{table}[ht]
\centering
\small
\caption{Long-only minimum variance on real data: time per solve and achieved KKT
residual. Sizes, conditioning and active-set size are given per universe.}
\label{tab:portfolio}
\begin{tabular}{lrr}
\toprule
Solver & ms & residual \\
\midrule
\quadprogPortfolioRows
\bottomrule
\end{tabular}
\end{table}

The differences from the synthetic families all run in the same direction.

\paragraph{Worse conditioning, and a far larger active set.} $\kappa(\Sigma)$ is
$\qpPfSpKappa$ on the S\&P~500 against the $O(n)$ of a Gaussian $BB\T + nI$: a
sample covariance is close to low rank plus diagonal, hence ill-conditioned, and
real data looks like that. At the minimum-variance
corner the solver holds $\qpPfSpActive{}$ of $\qpPfSpN{} + 1$ constraints active
and the portfolio holds only $\qpPfSpNames{}$ names. Long-only minimum variance
concentrates onto a vertex at which almost every bound binds, where a
well-conditioned Hessian spreads the optimum instead.

\paragraph{And so the exact walk is far longer.} It takes $\qpPfSpOuter{}$ outer
iterations here against the $\qpPdasBudgetOuter{}$ that the synthetic
budget-plus-bounds family produced at $n = 100$ in Section~\ref{ssec:pdas}. The
iteration count grows with the size of the set the walk must reach, and on real
data that set is large. Every statement in Section~\ref{ssec:pdas} about the
advantage of guessing the active set was therefore measured at the wrong end of
its range: on the S\&P~500 the fast path settles in $\qpPfSpRepairs{}$ repairs
against $\qpPfSpOuter{}$ outer iterations, and is $\qpPfFastSpeed\times$ faster
than the exact walk, against the $\qpCmpFastBudget\times$ of the synthetic family
with the same constraint shape at $n = \qpCmpNhi{}$.

The crossover of Section~\ref{ssec:compare} appears here too, with a point either
side and on real instances constructed for neither purpose: on the S\&P~500 the
exact walk beats both in-family competitors, the C reference and
DAQP~\cite{arnstrom2022daqp}, and on the FTSE~100, at $n = \qpPfFtseN{}$, it loses
to both.

\paragraph{Accuracy survives the conditioning.} The active-set method returns
$\qpPfSpResid$ despite $\kappa(\Sigma) = \qpPfSpKappa$, against five orders of
magnitude more for the interior-point solver at the same requested tolerance, and
eleven on the FTSE~100. Section~\ref{ssec:compare}'s argument reaches its sharpest
form here: the optimum is a vertex with $\qpPfSpActive{}$ active
constraints, which is the worst case for a method that approaches the boundary
asymptotically and the natural case for one that terminates on it.

\subsection{The frontier is what warm starting is for}
\label{ssec:frontier}

A frontier sweep varies $\rho$ in~\eqref{eq:portfolio} and changes nothing else.
That is precisely the family of Section~\ref{sec:sweep}: $\Sigma$, $C$ and $b$
fixed, only the linear term moving. Figure~\ref{fig:portfolio} traces
$\qpPfPoints{}$ points across the S\&P~500 frontier.

\begin{figure}[ht]
\centering
\includegraphics[width=0.86\textwidth]{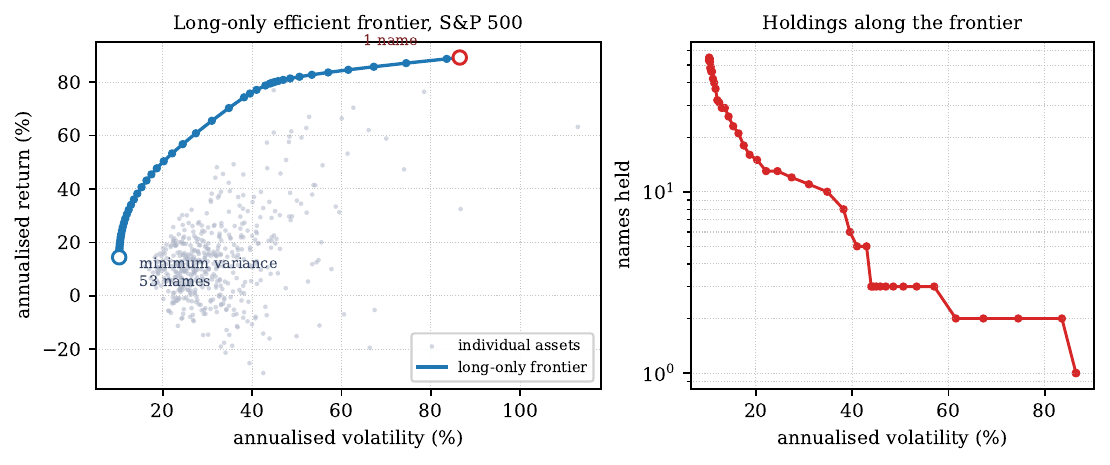}
\caption{Left: the long-only efficient frontier, with the individual assets it
dominates. Right: the number of names held along it, on a log scale, falling from
$\qpPfNamesHi{}$ at the minimum-variance corner to $\qpPfNamesLo{}$ at the
return-seeking end, where the budget concentrates in a single asset.}
\label{fig:portfolio}
\end{figure}

Reusing the factorisation costs $\qpPfWarm$\,ms per point against
$\qpPfCold$\,ms solving each point from scratch, a factor of $\qpPfSweepSpeed$,
and the answers are identical to the cold solves because a reused point is
returned only against the certificate of Section~\ref{sec:sweep}.

The interesting part is where that factor comes from, and it is not where one
would expect. The cached active set stayed optimal on only $\qpPfHits{}$ of the
$\qpPfPoints{}$ points; the other $\qpPfMisses{}$ were misses. A sweep spanning
three decades of $\rho$ moves a long way between consecutive points, the right-hand
panel showing the holdings falling from $\qpPfNamesHi{}$ names to $\qpPfNamesLo{}$
across it, and the active set moves with them. Sampling the
frontier more finely would raise the hit rate directly; nothing here is tuned to
produce one.

So the saving is not mostly the hit path. It is that a \emph{miss} is repaired
rather than restarted. Proposition~\ref{prop:repair} turns a stale set into a
dual-feasible state by dropping the constraints whose multipliers have gone
negative, and the iteration resumes from there instead of from the unconstrained
minimum, keeping $J$ and $R$ throughout. A sweep that misses two points in three
is still $\qpPfSweepSpeed\times$ faster than solving each point cold, which is a
sharper statement about the machinery than a high hit rate would have been: it
says the mechanism that matters is the one that runs when the cache is
\emph{wrong}.

An earlier version of this experiment spaced $\rho$ linearly. Almost every point
then landed on the return-seeking corner, where the solution is a single asset and
does not move, and the cache hit $48$ times out of $50$. That number was real and
any conclusion drawn from it would have been worthless, because the sweep was
solving the same problem forty-nine times instead of tracing a frontier. The
geometric spacing is what makes the left-hand panel a curve, and the honest hit
rate is the one that comes with it.

This section claims that the method's advantages are larger on the problems it was
designed for than on the synthetic families used to test it, and that the direction
of the gap was predicted, not found afterwards. It does not claim this is a
good way to build a portfolio: minimum variance on a raw sample covariance is a
poor estimator, and $\Sigma$ is used here only as a source of realistically shaped
matrices.

%% file: sections/s13_conclusions.tex
\section{Summary}
\label{sec:conclusions}

The Goldfarb--Idnani method is often presented as a sequence of updates, which
makes it look more intricate than it is. Read from the invariants it is short. One
matrix $J$ is carried, defined by $J\T G J = I$ and $J\T A = [R;0]$: its columns
are a $G$-orthonormal basis of $\R^n$, arranged so that the trailing block spans
the working set's feasible subspace. Every quantity the iteration needs is then one
matrix--vector product or one triangular solve away, and every quantity it
\emph{consumes} has a closed form in $(G, A, n_*)$ alone
(Proposition~\ref{prop:invariance}), which is why the representation may be
chosen for speed, and why two implementations holding different factors compute
the same step.

The iteration is one affine path (Proposition~\ref{prop:path}) along which
stationarity holds identically while the entering multiplier rises and the
working-set multipliers fall. Two limits cut it short, and which binds decides
whether a constraint is added or dropped. The objective advances by an exact closed
form (Proposition~\ref{prop:increment}), positive in both step directions, so the
method is a dual ascent and, absent degeneracy, finite
(Proposition~\ref{prop:ascent}). When the path is blocked in both directions the
vector $r$ is a Farkas certificate (Proposition~\ref{prop:farkas}): the
infeasibility verdict is a proof, and the solver already holds it.

Two properties of the classical method are easy to overlook, and the extensions
of Sections~\ref{sec:pdas} and~\ref{sec:sweep} trade one away and exploit the
other. The first is that linear dependence among the working-set normals is
unreachable: the step rules exclude it (Proposition~\ref{prop:rank}), so no rank
test is needed anywhere. Guessing the active set forfeits it, and a primal--dual
active-set fast path therefore admits no finite-termination theorem for general
constraints: there is no $P$-matrix structure to appeal to once the working-set
system stops being a principal submatrix. The second is that the factors depend
on $G$ and the working set but not on the linear term, so a family of programs
differing only in $a$ is solvable from one factorisation, and a stale active set
repairable rather than disposable.

Both extensions are unguaranteed and both are safe, for the same reason: strict
convexity makes the KKT conditions sufficient (Proposition~\ref{prop:sufficient}),
so a candidate can be certified instead of trusted. The strict convexity that
buys it is a choice of scope rather than a boundary of the approach: the dual
method extends to semidefinite $G$, with finite termination, by
Boland~\cite{boland1997}, and by proximal regularisation in the solver
of~\cite{arnstrom2022daqp}. Of the ideas here, that one travels furthest. A method that may fail, but that can recognise its own success,
needs a certificate and not a convergence theory.

One observation generalises past this solver. Below a few hundred variables a
solve costs what its array operations cost to dispatch, not what its
arithmetic costs, so halving the flop count buys nothing and halving the
\emph{iteration} count buys everything: the fast path of Section~\ref{sec:pdas}
gains about as much over the exact walk as rewriting that walk in another language
does, and gains most on the family where the rewrite gains least
(Section~\ref{ssec:compare}). Effort spent on kernels is
wasted where the bottleneck is the interpreter, and effort spent on iteration
counts is wasted where it is not; knowing which regime one is in is the whole of
the decision.

%% file: sections/s14_reproducibility.tex
\section*{Reproducibility}
\label{sec:reproducibility}
\addcontentsline{toc}{section}{Reproducibility}

Every number in this note is produced by a committed script, and no figure or
table was edited by hand. The scripts write the files the note \verb|\input|s, so
a stale figure cannot survive a regeneration unnoticed.

\begin{center}
\small
\begin{tabular}{@{}llll@{}}
\toprule
Script (\texttt{quadprog\_note/}) & Figure & Table & Used in \\
\midrule
\verb|experiment_qp_identities| & --- & --- & below \\
\verb|experiment_qp_pdas| & \verb|quadprog_pdas|
  & \verb|_pdas_defs| & \S\ref{ssec:pdas} \\
\verb|experiment_qp_sweep| & ---
  & \verb|_sweep_defs| & \S\ref{sec:sweep} \\
\verb|experiment_qp_compare| & \verb|quadprog_compare|
  & \verb|quadprog_compare| & \S\ref{ssec:compare} \\
\verb|experiment_qp_portfolio| & \verb|quadprog_portfolio|
  & \verb|quadprog_portfolio| & \S\ref{sec:portfolio} \\
\bottomrule
\end{tabular}
\end{center}

\noindent
Each row's table entry stands for both the tabular fragment and the
\verb|_defs| file of macros beside it, where the script writes both. The identity
script writes no figure, and the sweep script writes one the note does not
include: its cost curves are the natural way to inspect the two regimes
\S\ref{sec:sweep} describes in words, so the figure is left in place for a reader
who wants to see them.

\paragraph{The identities are checked against the code.} A proof establishes the
identities of Sections~\ref{sec:factorisation} to~\ref{sec:sweep} in exact
arithmetic; it does not establish that the implementation satisfies them, which
matters because it reaches them through a packed triangular factor addressed by
hand and through library calls writing in place into a view of $J$'s own buffer.
\verb|experiment_qp_identities| evaluates both sides of all \qpIdentCount{} of them
against the solver's own internal state and not a reimplementation:
\qpIdentChecks{} checks over \qpIdentStates{} factorisation states, worst relative
residual $\qpIdentWorst$. Two come out exactly zero, and one of those is
Remark~\ref{rem:scaleexact}.

\paragraph{Software under study.} \texttt{cvx-quadprog}, version
$0.4.2$~\cite{cvxquadprog}, archived with a DOI. The measurements are tied to that
version rather than to the latest release, so that the reference keeps pointing at
the code that produced them. The experiments import the package's private modules
where the claims are about private state, namely $J$, the packed $R$, and the
vectors $d$, $z$, $r$, and they instrument the shipped routines by wrapping them
instead of reimplementing them, so what is measured is what runs.

\paragraph{Regenerating everything.} From this note's source tree, \verb|make figures|
runs all five scripts and rewrites every generated file; \verb|make compile| then
rebuilds this document. Each script also runs standalone as a module. Setting
\verb|EXPERIMENT_SMOKE=1| shrinks every sweep so that the whole set completes in
seconds, which is how the continuous-integration suite exercises each code path
without reproducing the full runs, and \verb|EXPERIMENT_OUT| redirects the outputs
so that a reduced run cannot overwrite the committed ones.

\paragraph{Determinism.} Every random problem is drawn from a seed derived
arithmetically from the family, size and instance index, so the tables reproduce
across runs and machines. Timings do not, and are not claimed to: they were taken
on one machine against NumPy built on Apple Accelerate, with no thread count set.
That is not incidental. This solver pushes its work into BLAS calls, so its
timings inherit whatever threading decision the installed library makes by default,
and those defaults differ between libraries by more than the effects measured here.
Accelerate exposes no thread control, so there was nothing to set; on a library
that does, the same experiment can read differently. The residuals of
the identity check below and the counts of \S\ref{ssec:pdas} are machine-independent
to the last digit or two of rounding, and are what the note rests on.

\paragraph{Optional dependency.} The comparison in \S\ref{ssec:compare} includes
the reference C implementation where a wheel for it is installed; building it needs
the C toolchain the package under study exists to avoid, so that row is omitted,
with a printed note, on a host that lacks it. Every other row is unconditional.